\theoremstyle{theorem}
\newtheorem{theorem}{Theorem}
\newtheorem{lemma}{Lemma}
\theoremstyle{remark}
\newtheorem{remark}{Remark}
\numberwithin{equation}{section}
\newcommand{\ve}{\varepsilon}
\newcommand{\deb}{\rightharpoonup}
\newcommand{\pical}{\mathcal{P}}
\newcommand{\T}{\mathrm{T}}
\newcommand{\R}{\mathbb{R}}
\newcommand\dd {\mathrm{d}}
\DeclareMathOperator{\argmin}{argmin}
\newcommand\blfootnote[1]{%
  \begingroup
  \renewcommand\thefootnote{}\footnote{#1}%
  \addtocounter{footnote}{-1}%
  \endgroup
}
\begin{document}
\author{Jos\'e-Antonio Carrillo, Filippo Santambrogio}
\title[$L^\infty$ estimates for the Keller-Segel JKO scheme]{$L^\infty$ estimates for the JKO scheme\\ in parabolic-elliptic Keller-Segel systems}
\address{{\bf J.-A. C.} Department of Mathematics, Imperial College London, South Kensington Campus, SW7 2AZ London UK \\ \newline \indent \indent
Email: {\tt carrillo@imperial.ac.uk} \\ \newline \indent 
 {\bf F. S.} Laboratoire de Math\'ematiques d'Orsay, Univ. Paris-Sud, CNRS, Universit\'e Paris-Saclay, 91405 Orsay Cedex, France\\ \newline \indent \indent Email: {\tt filippo.santambrogio@math.u-psud.fr}.}
%\address
\maketitle

\vspace{-1.0cm}

\begin{abstract} We prove $L^\infty$ estimates on the densities that are obtained via the JKO scheme for a general form of a parabolic-elliptic Keller-Segel type system, with arbitrary diffusion, arbitrary mass, and in arbitrary dimension. Of course, such an estimate blows up in finite time, a time proportional to the inverse of the initial $L^\infty$ norm. This estimate can be used to prove short-time well-posedness for a number of equations of this form regardless of the mass of the initial data. The time of existence of the constructed solutions coincides with the maximal time of existence of Lagrangian solutions without the diffusive term by characteristic methods.
\end{abstract}

\blfootnote{JAC was partially supported by the Royal Society via a Wolfson Research Merit Award and by EPSRC grant number EP/P031587/1. The work has been finished during a visit of FS to the Imperial College, in the framework of a joint CNRS-Imperial Fellowship; the hospitality and the financial support of the Imperial College are warmly acknowledged.}

\vspace{-0.5cm}

\section{Introduction}
We consider in this work a general version of the so-called parabolic-elliptic Keller-Segel system, i.e.
\begin{equation}\label{syst}
\begin{cases}\partial_t \rho+\chi\nabla\cdot(\rho\nabla u)-\nabla\cdot(\rho\nabla f'(\rho))=0&\mbox{ in }[0,T]\times \Omega,\\
			-\Delta u=\rho &\mbox{ in $ \Omega$ for every $t\in [0,T]$},\\
			u=0&\mbox{ on $ \partial\Omega$ for every $t\in [0,T]$},\\
			\rho(\nabla u - \nabla f'(\rho))\cdot \mathbf{n}=0&\mbox{ on $[0,T]\times \partial\Omega$},\\
			\rho(0,x)=\rho_0(x)& \mbox{ in } x\in \Omega .
			\end{cases}
			\end{equation}
This classical system \cite{KS,JL,HP,CC} models the evolution of a population $\rho$ of bacteria, which diffuse and are advected by a drift, being attracted by the high values of a chemoattractant substance whose concentration is given by $u$. The term $\nabla\cdot(\rho\nabla f'(\rho))$ is a linear diffusion $\Delta \rho$ whenever $f(s)= s\log s$, or acts as a nonlinear diffusion $\Delta \Psi(\rho)$ with diffusion coefficient $\Psi'(s)=s f''(s)$ in general. The chemoactractant density $u$ is ruled by the distribution of the bacteria itself since they produce themselves the chemical/protein to which they are attracted to as a way of cell signalling. The initial density $\rho_0$ is supposed to be a probability density, which is always possible up to a suitable scaling. The constant $\chi>0$ and the convex and superlinear function $f:\R_+\to\R$ are parameters of the model. Note that the fact that the values $u(t,\cdot)$ only depend on the values $\rho(t,\cdot)$ at the same time is a strong modeling assumption, which corresponds to the fact that adjustement in the distribution of the chemoattractant $u$ occur at a time scale much faster than the movement of the bacteria. This gives rise to the above parabolic-elliptic system, while different models involving $\partial_t u$ would be possible, and have been studied in, for instance \cite{CalCor,BCKKLL} and the references therein.

We concentrate, for simplicity, on the case where $\Omega$ is a bounded convex set, and we impose Dirichlet boundary conditions on $u$, together with no-flux boundary conditions on $\rho$ through $\partial\Omega$. The results of the paper can easily be adapted to the case where the Dirichlet conditions on $u$ are replaced by Neumann conditions by taking $-\Delta u=\rho-c$ (for $c=1/|\Omega|$) or $-\Delta u+u=\rho$, with $\nabla u\cdot \mathbf{n}=0$ on $\partial\Omega$. Also the whole space case $\Omega=\R^d$ can be treated. Both of these variants will be addressed at the end of Section \ref{2} but, for simplicity of the exposition, the main theorems will be stated in the Dirichlet case in bounded domains.

The PDE system \eqref{syst} is known to be the gradient-flow of the energy
$$
J(\rho):=\int f(\rho(x))\dd x-\frac{\chi}{2}\int |\nabla u(x)|^2\dd x,
$$
where $u$ depends on $\rho$ through -$\Delta u=\rho$ with $u=0$ on $ \partial\Omega$, with respect to the Wasserstein metric $W_2$. For the notion of gradient flows and of gradient flows for this metric, we refer to \cite{villani,AmbGigSav,GradFlowSurvey}.

This means, among other things, that a way to produce a solution to \eqref{syst} is to fix a time step $\tau>0$ and then iteratively solve the minimization problem
$$
\rho_{k+1}\in \argmin \left\{J(\rho)+\frac{W_2^2(\rho,\rho_k)}{\tau}\;:\; \rho\in\pical(\Omega) \right\},
$$
where the minimum is taken over all probability densities $\rho\in\pical(\Omega)$, which must be absolutely continuous because of the superlinear term $\int f(\rho)$. One defines a $W_2$-continuous curve in $\pical(\Omega)$ setting $\rho^\tau(t)=\rho_k$ for $t=k\tau$ and interpolating using Wasserstein geodesics in the intervals $(k\tau,(k+1)\tau)$. Then, it is possible to prove that the limit as $\tau\to 0$ provides a solution of \eqref{syst}. This discrete-in-time scheme to provide a solution is known under the name of Jordan-Kinderlehrer-Otto scheme. It was first introduced in \cite{JKO} for the linear Fokker-Planck equation, and then subsequently used for the porous medium equation \cite{Otto porous}.

Yet, in this very precise case some difficulties arise due to the minus sign in front of the term $\int |\nabla u|^2$. This energy, in terms of $\rho$, corresponds to a squared norm in the dual of $H^1_0(\Omega)$ and it is clear that semicontinuity and bounds from below may fail when it is taken with a negative sign. On the other hand, it is an energy of order $-1$, in the sense that it acts as an $L^2$ norm of antiderivatives of $\rho$, and one can expect that the term $\int f(\rho)$ can somehow compensate it. This turns out to depend on $f$, on the dimension, and on $\chi$. Indeed, in dimension $d=2$, in the case of linear diffusion, i.e. $f(t)=t\log t$, it is proven that the above minimization problem has a solution, and the JKO scheme converges, as soon as $\chi<8\pi$, see \cite{BlaDolPer}. In the critical case $\chi=8\pi$ solutions exist and they can also be constructed by variants of the JKO scheme, see \cite{BlaCarMas,BlaCarCar}. In higher dimension $d>2$, this critical mass phenomena exists corresponding to other critical energies, of the form $f(t)=t^{m}$ with $m=2(d-1)/d$, see \cite{BlaCarLau}.

In this paper we will consider a slightly modified JKO scheme, adding a constraint, depending on $\tau$ and disappearing as $\tau\to 0$, on the $L^\infty$ norm of $\rho$ in the above minimization problem. We then prove that a solution at each time step exists, and that it satisfies an $L^\infty$ bound that makes the constraint unbinding, and that we can iterate. Unfortunately, this estimate will explode in finite time $T$, with $T=(\chi ||\rho_0||_{L^\infty})^{-1}$. Yet, the good news is the fact that this bound will be completely independent of the choice of $f$ and of $\chi$. Essentially, as the reader can easily guess, we are providing $L^\infty$ bounds that hold for the system without diffusion, as in \cite{Poupaud,BerLauLeg}, which is compatible with an estimate exploding in time. The interest of our work mainly resides in its discrete-in-time nature, namely the fact that the estimate is proven along the JKO scheme. Let us finally mention that $L^\infty$ bounds at the level of the JKO scheme were recently shown for fractional porous medium equations \cite{LMS} with totally different goals and strategy.

Since uniqueness results for the Keller-Segel system exist under $L^\infty$ assumptions \cite{CarLisMai,LW} for certain nonlinearities, see \cite{AmbSer,Craig,Loeper} for related results, then this $L^\infty$ bound allows for existence (and uniqueness with the additional assumptions in \cite{CarLisMai,LW}) results in short time. We prove these existence results in Section \ref{3}, and that they are of interest because they are very general in terms of the diffusion nonlinearity. They provide short-time existence even above critical mass. In short, we show in this work that solutions of the Keller-Segel model exists, and they are unique under additional assumptions on the nonlinearities, at least for the time of existence and uniqueness of Lagrangian bounded solutions of the model without diffusion. 

%%%%%%%%%%%%%%%%%%%%%%%%%%%%%

\section{$L^\infty$-estimates}\label{2}

To make our strategy precise, we will consider the following minimization problem, for fixed $\tau>0$, and for $g\in L^\infty(\Omega)$:
\begin{equation}\label{min}
\min \left\{J(\rho)+\frac{W_2^2(\rho,g)}{\tau}\;:\; \rho\in\pical(\Omega),\,\rho\leq M:=\frac 1{\chi\tau}\right\}
\end{equation}
We will prove the following theorem
\begin{theorem}
Problem \eqref{min} admits at least one solution; moreover, if $\Omega$ is convex, $\inf_{t>0} tf''(t)>0$  and $\log g\in C^{0,\alpha}(\Omega)$, then for every $\lambda>1$ there exists a constant $c_0=c_0(\lambda,\chi,d)$ such that, if $\tau ||g||_{L^\infty}\leq c_0$, then any solution $\rho$ to this problem satisfies
$$||\rho||_{L^\infty}\leq ||g||_{L^\infty}\frac{1}{1-\lambda\tau\chi||g||_{L^\infty}},$$ 
or, equivalently
\begin{equation}\label{1X1Y}
||\rho||_{L^\infty}^{-1}\geq ||g||_{L^\infty}^{-1}-\lambda \tau\chi.
\end{equation}
Removing the assumptions $\inf_{t>0} tf''(t)>0$  and $\log g\in C^{0,\alpha}(\Omega)$, the same $L^\infty$ estimate is true for at least one particular solution of \eqref{min}.
\end{theorem}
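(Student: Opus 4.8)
The plan is to obtain existence by the direct method and then to read the pointwise $L^\infty$ bound off the interplay between the Monge--Amp\`ere equation of the optimal map pushing $\rho$ onto $g$ and the Euler--Lagrange equation of \eqref{min}, tested at a maximum point of $\rho$.

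For existence I would take a minimizing sequence $\rho_n\le M$ with $\int\rho_n=1$. Since $\Omega$ is bounded these densities are tight and bounded in $L^\infty$, hence weakly-$*$ compact; let $\rho_n\destar\rho$. The term $\int f(\rho)$ is weakly-$*$ lower semicontinuous by convexity of $f$, and $\rho\mapsto W_2^2(\rho,g)$ is lower semicontinuous for narrow convergence. The only term with the wrong sign, $-\tfrac\chi2\int|\nabla u|^2$, is in fact weakly-$*$ \emph{continuous}: writing $u_n=(-\Delta)^{-1}\rho_n$ with Dirichlet conditions, the compactness of $(-\Delta)^{-1}\colon L^2(\Omega)\to H^1_0(\Omega)$ gives $u_n\to u$ strongly in $H^1_0$, so $\int|\nabla u_n|^2\to\int|\nabla u|^2$. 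Thus $J+\tfrac1\tau W_2^2(\cdot,g)$ is lower semicontinuous and a minimizer exists, the constraint being preserved in the limit.

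For the estimate, assume first $\inf_t tf''(t)>0$ and $\log g\in C^{0,\alpha}$; these nondegeneracies make the diffusion uniformly elliptic and, together with the convexity of $\Omega$ (Caffarelli regularity for the Brenier potential with a convex target), give enough regularity for $\rho$ to be continuous and for the optimal potential $\varphi$ (with map $\id-\nabla\varphi$ sending $\rho$ to $g$) to be twice differentiable where needed. From the Monge--Amp\`ere relation $\rho(x)=g\big((\id-\nabla\varphi)(x)\big)\det(I-D^2\varphi(x))$ and $g\le\|g\|_{L^\infty}=:m$, the arithmetic--geometric inequality applied to the nonnegative matrix $I-D^2\varphi\ge0$ yields the pointwise bound $\rho(x)\le m\big(1-\tfrac1d\Delta\varphi(x)\big)^d$. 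Independently, the Euler--Lagrange equation on $\{\rho<M\}$ reads $f'(\rho)-\chi u+\tfrac2\tau\varphi=\mathrm{const}$; taking its Laplacian and using $-\Delta u=\rho$ gives $\tfrac2\tau\Delta\varphi=-\chi\rho-\Delta\big(f'(\rho)\big)$. At an interior maximum point $x_0$ of $\rho$ one has $\Delta(f'(\rho))(x_0)\le0$, whence $-\Delta\varphi(x_0)\le\tfrac\tau2\chi\|\rho\|_{L^\infty}$. Combining the two displays at $x_0$ and using the elementary inequality $(1+\tfrac{s}{2d})^d\le 1+\lambda s$, valid for every $\lambda>1$ and every $s\in[0,1]$ (here $s=\tau\chi\|\rho\|_{L^\infty}\le\tau\chi M=1$), I obtain $\|\rho\|_{L^\infty}\le m\big(1+\lambda\tau\chi\|\rho\|_{L^\infty}\big)$, which rearranges exactly into \eqref{1X1Y}.

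The main obstacle is the rigorous justification of this maximum-principle step. First, one must upgrade the above formal regularity enough to differentiate twice and to guarantee that the maximum of $\rho$ is attained; here the hypotheses $\inf_t tf''(t)>0$ and $\log g\in C^{0,\alpha}$ and the convexity of $\Omega$ do the work, the latter also controlling the sign of the boundary terms should the maximum sit on $\partial\Omega$. Second, one must ensure the constraint is inactive at the maximum so that the free Euler--Lagrange equation applies: if $\{\rho=M\}$ is negligible the maximum lies in $\{\rho<M\}$ by continuity, while if $\{\rho=M\}$ has nonempty interior then, evaluating the equation with its nonnegative multiplier $p$ at an interior maximum of $p$ (where $\Delta p\le0$ and $f'(\rho)$ is locally constant), the same inequality forces $M\le m(1+\lambda\tau\chi M)$, i.e.\ $(1+\lambda)\chi\tau m\ge1$, contradicting $\tau m\le c_0$ as soon as $c_0<\big((1+\lambda)\chi\big)^{-1}$; this choice of $c_0=c_0(\lambda,\chi,d)$ also keeps $1-\lambda\tau\chi m>0$. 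Finally, to remove the two extra hypotheses I would approximate, replacing $f$ by $f_\varepsilon(s)=f(s)+\varepsilon\,s\log s$ (so that $sf_\varepsilon''(s)\ge\varepsilon>0$) and $g$ by a regularization $g_\varepsilon$ with $\log g_\varepsilon\in C^{0,\alpha}$ and $\|g_\varepsilon\|_{L^\infty}\to\|g\|_{L^\infty}$; each minimizer $\rho_\varepsilon$ obeys the bound, and a weak-$*$ limit of the $\rho_\varepsilon$ is a solution of the original \eqref{min} (by $\Gamma$-convergence of the functionals) whose $L^\infty$ norm still obeys the estimate by lower semicontinuity, yielding the asserted particular solution.
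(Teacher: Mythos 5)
Your existence argument, the Monge--Amp\`ere/arithmetic--geometric core of the estimate, and the final approximation step all match the paper's proof, and your factor-of-$2$ bookkeeping together with the elementary inequality $(1+\tfrac{s}{2d})^d\le 1+\lambda s$ on $s\in[0,1]$ is correct. The genuine problems are in the justification of the maximum-principle step, in two places. First, the boundary case is not actually handled: you assert that convexity of $\Omega$ ``controls the sign of the boundary terms'', but convexity alone gives only the non-strict inequality $\nabla\varphi(x_0)\cdot\mathbf{n}\ge 0$ at a boundary extremum (because the optimal map $x_0-\nabla\varphi(x_0)$ must land in $\overline\Omega$), and this is compatible with the first-order boundary condition, so no contradiction follows. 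The missing ingredient is Hopf's boundary point lemma applied to $u$: since $-\Delta u=\rho\ge 0$ with $u=0$ on $\partial\Omega$, one has the \emph{strict} sign $\nabla u(x_0)\cdot\mathbf{n}<0$, and only the combination of the two signs rules out a boundary extremum. Without this, the interior second-order argument --- the heart of the proof --- cannot be invoked.

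Second, your treatment of the constraint rests on a false dichotomy: ``$\{\rho=M\}$ negligible'' and ``$\{\rho=M\}$ has nonempty interior'' are not complementary (consider a positive-measure set with empty interior), and the implication ``negligible $\Rightarrow$ the maximum lies in $\{\rho<M\}$'' is wrong --- if $\{\rho=M\}$ is nonempty, even a single point, the maximum of the continuous function $\rho$ equals $M$ and is attained exactly on $\{\rho=M\}$, where your free Euler--Lagrange equation is unavailable. (A correct repair: if $\{\rho=M\}$ has empty interior, the continuous multiplier $p$ vanishes on a dense set, hence $p\equiv 0$ and the free equation holds everywhere; note also that in your other case the maximum of $p$ could sit on $\partial\Omega$, which you again do not exclude.) The paper sidesteps this entire case analysis with one device worth internalizing: it takes $x_0$ to be a minimum point of the everywhere-$C^2$ function $\phi-\tau\chi u$; the optimality system $f'(\rho)+p-\chi u+\phi/\tau=c$, $p\ge 0$, $p(M-\rho)=0$ shows that such a point automatically maximizes $f'(\rho)+p$ and hence $\rho$, and the needed inequality $\Delta\phi(x_0)\ge-\tau\chi\rho(x_0)$ follows there without ever differentiating $f'(\rho)$ or $p$, and regardless of whether the constraint is active. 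Your Laplacian-of-the-Euler--Lagrange-equation computation is equivalent to this where it is legitimate, but as written the step producing $-\Delta\varphi(x_0)\le\tfrac{\tau\chi}{2}\|\rho\|_{L^\infty}$ is not justified at the points where the maximum can actually occur.
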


\begin{proof}
First, we prove the existence of an optimizer $\rho$. We take a minimizing sequence $\rho_n\in \pical(\Omega)\cap L^\infty (\Omega)$ and thus, we can extract a weakly converging subsequence $\rho_n\deb\rho$ (we have weak-* convergence in $L^\infty$ and in the space of measures, and weak in all $L^p(\Omega)$, $1\leq p<\infty$). All the terms, except the potential energy, in the functional we minimize are classically known to be lower semicontinous for the weak convergence of probability measures. Since the potential energy term is the $H^1$ norm of $u_n$, solution of $-\Delta u_n=\rho_n$, modulo its sign, we make use of the $L^\infty$ bound $\rho_n$ to deduce that $\rho_n\to\rho$ in $H^{- 1}$ (using the compact injection of $L^2$ into $H^{-1}$), and hence the corresponding $u_n$ strongly converges in $H^1$ to the solution of $-\Delta u=\rho$, which makes this term continuous. This proves that the limit measure $\rho$ is a minimizer.

We now consider the optimality conditions in this minimization problem and we use the assumptions on $\Omega$, $f$ and $g$.  First, by the same arguments in \cite[Lemma 8.6]{OTAM}, we observe that the optimal $\rho$ has a density which is strictly positive a.e. (the assumption on $f''$ implies that $f$ behaves at $t=0$ at least as the entropy $f(t)=t\log t$, and in particular $f'(0)=-\infty$). Hence, the Kantorovich potential $\phi$ in the quadratic transport (for the cost $c(x,y)=|x-y|^2/2$) from $\rho$ to $g$ is unique, see \cite[Proposition 7.18]{OTAM}. Then, the optimality conditions for this constrained problem read as follows: there exists a constant $c$ and a continuous function $p$ such that 
\begin{equation}\label{condwithp}
f'(\rho)+p-\chi u+\frac{\phi}{\tau}=c,\quad p\geq 0, \, \rho\leq M,\, p(M-\rho)=0.
\end{equation}
The proof of this fact is quite classical in optimization under density constraints, see \cite{MRS10}: we first note that the first variation of the functional that we minimize is $h:=f'(\rho)-\chi u+\frac{\phi}{\tau}$ where $\phi$ is the Kantorovich potential such that the map $x-\nabla \phi$ gives the optimal map from $\rho$ to $g$, for this derivation we refer to \cite[Section 7.2.1, 7.2.2 and 7.2.3]{OTAM}. Then we observe that the optimality of $\rho$ implies 
$$
\int h\tilde\rho\geq \int h\rho
$$ 
for every other admissible $0\leq \tilde\rho\leq M$, which means that $\rho$ is equal to $M$ on a sublevel set $\{h<c\}$, equal to $0$ on $\{h>c\}$, and between $0$ and $M$ on the level set $\{h=c\}$. The result is then obtained by taking $p:=(c-h)_+$, see for instance \cite[Theorem 4]{BCLR2} or \cite[Lemma 3.2]{MRS10}.

From these conditions, we deduce some regularity for $\rho$. Indeed, distinguishing the cases $\rho=M$ and $\rho<M$, we get
$$
f'(\rho)=\min\left\{f'(M),c+\chi u-\frac\phi\tau\right\}.
$$
Since $\rho\in L^\infty(\Omega)$, we have $u\in W^{2,p}(\Omega)$ for large $p$, hence $u$ is Lipschitz continuous, and $\phi$ is also Lipschitz continuous. Hence, the same is true for $f'(\rho)$ and, using the lower bound on $ f''$, for $\rho$ itself. Moreover, we also obtain $\inf \rho>0$. Of course this regularity depends on $\tau$, but it allows us to perform some computations. 

We next want to estimate the $L^\infty$-norm of an optimal $\rho$. Using $\log g\in C^{0,\alpha}(\Omega)$, we are in a case where both $g$ and $\rho$ are $C^{0,\alpha}(\Omega)$ and bounded from below, and Caffarelli's theory (see \cite{caf1,caf2,caf3,DePFig survey}) implies $\phi\in C^{2,\alpha}(\Omega)$. Note that from $\rho\in C^{0,\alpha}(\Omega)$ one also obtains $u\in C^{2,\alpha}(\Omega)$. 

Now, take a point $x_0\in \Omega$ which is a minimum point for $\phi-\tau\chi u$. Such a point exists since $\Omega$ is compact and these functions are continuous. Yet, from \eqref{condwithp}, one can see that this point maximizes $f'(\rho)+p$, and hence $\rho$, so that we have $||\rho||_{L^\infty}=\rho(x_0)$. First, let us prove $x_0\notin\partial\Omega$. Indeed, if it were on the boundary, we would have $(\nabla\phi(x_0)-\tau\chi\nabla u(x_0))\cdot \mathbf{n}\leq 0$. Yet, the optimal transport map $T$ from $\rho$ to $g$ has the form $T(x)=x-\nabla\phi(x)$, and maps $\Omega$ onto $\Omega$. Hence $(T(x_0)-x_0)\cdot \mathbf{n}\leq 0$, but this means $\nabla\phi(x_0)\cdot \mathbf{n}\geq 0$. Moroever, $u=0$ on $\partial\Omega$ and $-\Delta u=\rho\geq 0$ in $\Omega$. The strong maximum principle provides $\nabla u(x_0)\cdot \mathbf{n}<0$ on every point of the boundary. Hence it is not possible to have $x_0\in\partial\Omega$. Once we know that the minimizer of $\phi-\tau\chi u$ lies in the interior of $\Omega$, we deduce $\Delta\phi(x_0)-\tau\chi\Delta u(x_0)\geq 0$, but this means $\Delta\phi(x_0)\geq -\tau\chi\rho(x_0)$.
We now use the Monge-Amp\`ere equation connecting $\phi, T, \rho$ and $g$: 
$$
\rho(x_0)=g(T(x_0))\det(I-D^2\phi(x_0)).
$$
Since $I-D^2\phi(x_0)$ is a positive symmetric matrix, the arithmetic-geometric inequality provides the well-known inequality $\det(I-D^2\phi(x_0))^{1/d}\leq 1-\frac{\Delta\phi(x_0)}{d}$, hence
$$
\rho(x_0)\leq g(T(x_0))\left(1-\frac{\Delta\phi(x_0)}{d}\right)^d\leq ||g||_{L^\infty}\left(1+\tau\chi\frac{\rho(x_0)}{d}\right)^d.
$$
Setting $X:=\frac{\tau\chi}{d}||\rho||_{L^\infty}=\frac{\tau\chi}{d}\rho(x_0)$ and $Y=\frac{\tau\chi}{d}||g||_{L^\infty}$, we get
$$
Y\geq \frac{X}{(1+X)^d}.
$$
Looking at the behavior of $x\mapsto x/(1+x)^d$,
we observe that this function is maximal at $x=1/(d-1)$. By choosing the constant $c_0$ in the statement small enough, the value of $Y$ will be such that the level set $\{x\,:\,  x/(1+x)^d\leq Y\}$ will be composed of two intervals, one before the maximal point and one after. Yet, since we know $X\leq M\tau\chi/d=1/d<1/(d-1)$, the value of $X$ can only be in the first interval. If $c_0$, and hence $Y$, is small enough, this interval is a small neighborhood of the origin, on which we can use the inequality
$$
\frac{x}{(1+x)^d}\geq \frac{x}{1+\lambda dx},
$$
which is true for $\lambda>1$ and for small $x$ just by comparing the second derivatives of these two functions at $x=0$ (actually the values of the functions and of their first derivatives coincide at $x=0$). Hence, we get
$$
\frac{X}{1+\lambda dX}\leq Y,
$$
which is equivalent to
$$
\frac 1X+\lambda d\geq \frac 1Y
$$ 
and, substiting the values of $X$ and $Y$, we obtain exactly \eqref{1X1Y}. 

The last part of the statement is obtained by a standard approximation procedure. We take a sequence of smooth and bounded from below densities $g_n$ converging to $g$, and a sequence of functions $f_n$ defined via $f_n(t)=f(t)+n^{-1}t \log t$, and take the corresponding minimizers $\rho_n$. Up to subsequences, the minimizers $\rho_n$ converge to a minimizer $\rho$ for the Problem \eqref{min} with $g$ and $f$. This means that the same estimate is true for $\rho$, which is only one particular minimizer corresponding to $g$ and $f$, and not the unique one. This explains why the same estimate is not guaranteed for all the minimizers.
\end{proof}

The previous estimate can be iterated leading to an estimate on the evolution of the $L^\infty$-norm for this particular JKO scheme with constraints.

\begin{theorem}
Let $\rho_0\in L^\infty$ be a given probability density. Take $\chi t_0<||\rho_0||_{L^\infty}^{-1}$ and let $\lambda>1$ and $\ve_0>0$ be such that $||\rho_0||_{L^\infty}^{-1}-\chi \lambda t_0>\ve_0$. Fix $\tau<\ve_0 c_0$. Then there exists a sequence $\rho_{k}$ for $k=1,\dots,  \lfloor t_0/\tau \rfloor$ obtained by iteratedly solving \eqref{min}:
$$
\rho_{k+1}\in\argmin \left\{J(\rho)+\frac{W_2^2(\rho,\rho_k)}{\tau}\;:\,\rho\in\pical(\Omega), \,\rho\leq \frac{1}{\chi\tau}\right\}
$$
satisfying the $L^\infty$ estimate
\begin{equation}\label{iterated}
||\rho_k||_{L^\infty}\leq \frac{||\rho_0||_{L^\infty}}{1-\lambda k\tau ||\rho_0||_{L^\infty}}\leq \frac{1}{\ve_0} \quad \mbox{ or equivalently } \quad
||\rho_k||_{L^\infty}^{-1}\geq ||\rho_0||_{L^\infty}^{-1}-\lambda k\tau \geq \ve_0\,.
\end{equation}
\end{theorem}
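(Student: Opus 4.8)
The plan is to argue by a finite induction on $k$, using the single-step Theorem as the engine and the choice $\tau<\ve_0 c_0$ to guarantee that its quantitative hypothesis holds at every stage. Taking $g=\rho_k$ in \eqref{min}, the estimate \eqref{1X1Y} says that each step costs at most $\lambda\tau\chi$ in the reciprocal $L^\infty$ norm, i.e. $\|\rho_{k+1}\|_{L^\infty}^{-1}\ge\|\rho_k\|_{L^\infty}^{-1}-\lambda\tau\chi$. Iterating these inequalities gives $\|\rho_k\|_{L^\infty}^{-1}\ge\|\rho_0\|_{L^\infty}^{-1}-\lambda k\tau\chi$, and since $k\tau\le t_0$ the hypothesis $\|\rho_0\|_{L^\infty}^{-1}-\chi\lambda t_0>\ve_0$ keeps the right-hand side above $\ve_0$ for every admissible $k$, which is the content of \eqref{iterated}. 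The induction therefore closes as soon as, at each step, the single-step estimate may be legitimately applied.

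Verifying the smallness hypothesis $\tau\|\rho_k\|_{L^\infty}\le c_0$ is immediate: the inductive bound forces $\|\rho_k\|_{L^\infty}\le 1/\ve_0$, hence $\tau\|\rho_k\|_{L^\infty}\le\tau/\ve_0<c_0$ by the choice $\tau<\ve_0 c_0$. The real obstacle is the regularity needed for the strong form of the single-step Theorem: one needs $\log\rho_k\in C^{0,\alpha}(\Omega)$ together with $\inf_{t}tf''(t)>0$, and since $\rho_0$ is only assumed to lie in $L^\infty$, neither $\log\rho_0\in C^{0,\alpha}$ nor the $f''$-condition is available, so I cannot feed $\rho_0$ directly into the strong estimate at the first step.

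To get around this I would run the whole scheme at a regularized level. Fix smooth, strictly positive densities $\rho_0^n\to\rho_0$ with $\|\rho_0^n\|_{L^\infty}\to\|\rho_0\|_{L^\infty}$ (mollification does not increase the sup norm), and replace $f$ by $f_n(t)=f(t)+n^{-1}t\log t$, which satisfies $\inf_t tf_n''(t)>0$. For fixed $n$ the strong single-step Theorem applies at the first step because $\log\rho_0^n$ is smooth; crucially, its proof yields a minimizer that is $C^{0,\alpha}$ and bounded below, so $\log\rho_1^n\in C^{0,\alpha}(\Omega)$ and the Theorem applies again. This propagation of regularity lets me build the entire finite sequence $\rho_1^n,\dots,\rho_{\lfloor t_0/\tau\rfloor}^n$, each step obeying the quantitative bound (for $n$ large the strict inequality $\|\rho_0^n\|_{L^\infty}^{-1}-\chi\lambda t_0>\ve_0$ persists), so the iterated estimate \eqref{iterated} holds for the whole regularized family.

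It remains to pass to the limit $n\to\infty$. Because there are only finitely many steps, I extract nested subsequences so that $\rho_k^n\deb\rho_k$ for each $k$ simultaneously. As in the existence part of the single-step proof, the uniform $L^\infty$ bound gives $H^{-1}$-compactness of the potential term, so the limit $\rho_k$ is a genuine minimizer of \eqref{min} with target $\rho_{k-1}$ and nonlinearity $f$; here one invokes the stability of a single JKO step under the joint convergence $\rho_{k-1}^n\to\rho_{k-1}$ and $f_n\to f$, and it is precisely this step-by-step passage to the limit — rather than any new estimate — that is the delicate part of the argument. Finally, the reciprocal $L^\infty$ bound is stable under weak-$*$ convergence, so \eqref{iterated} survives in the limit and yields the desired sequence $\rho_k$.
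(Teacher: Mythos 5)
Your proposal is correct, and its core — induction on $k$, applying the single-step theorem with $g=\rho_k$, and checking $\tau\|\rho_k\|_{L^\infty}\leq \tau/\ve_0<c_0$ from $\tau<\ve_0 c_0$ — is exactly the paper's argument. Where you diverge is in handling the roughness of the data: the paper's proof is essentially three lines because it invokes the \emph{final clause} of the single-step theorem (``removing the assumptions $\inf_{t>0}tf''(t)>0$ and $\log g\in C^{0,\alpha}$, the same $L^\infty$ estimate is true for at least one particular solution''), whose proof already packages the approximation $g_n\to g$, $f_n=f+n^{-1}t\log t$ for a \emph{fixed} target $g$; the iteration then just picks, at each step, one such good minimizer. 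You instead decline to use that clause and redo the regularization globally: smooth out $\rho_0$ once, replace $f$ by $f_n$, run the entire finite scheme at level $n$ using the strong form of the single-step theorem — your observation that the strong form's proof propagates the needed regularity ($\rho_k^n$ Lipschitz and bounded below, hence $\log\rho_k^n\in C^{0,\alpha}$) is a correct reading of that proof — and then pass to the limit in all $\lfloor t_0/\tau\rfloor$ steps simultaneously. This is legitimate but costs more than the paper's route: your limit passage needs stability of a JKO step when the \emph{target} also varies (only weakly-$*$) along the approximation, a slightly stronger statement than the fixed-target approximation in the paper, though on a compact domain it is harmless since $W_2$ metrizes weak convergence (so $|W_2^2(\rho,\rho_{k-1}^n)-W_2^2(\rho,\rho_{k-1})|\to 0$ uniformly in $\rho$), the $L^\infty$ bound gives $H^{-1}$ compactness for the potential term, and $n^{-1}\int\rho\log\rho$ vanishes uniformly on $\{\rho\leq M\}$. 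What your detour buys is a self-contained construction showing the regularized schemes themselves converge, step by step, to a limiting scheme; what the paper's organization buys is doing the approximation once and for all at the single-step level, so that the iteration theorem carries no analytic content beyond bookkeeping of the reciprocal norms.
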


\begin{proof}
We use the previous results, and prove by induction on $k$ that we may choose a minimizer $\rho_{k+1}$ satisfying the desired estimate. Note that $\rho_0$ satisfies the desired estimate. Now suppose we have $\rho_k$ satisfying the corresponding estimate, then Theorem \ref{min} can be applied with $g=\rho_k$ since, by assumption, we have $\tau<\ve_0 c_0$ and $||\rho_k||_{L^\infty}\leq \ve_0^{-1}$, which imply $\tau ||\rho_k||_{L^\infty}<c_0$. This provides the existence of a minimizer $\rho_{k+1}$ with 
$$
||\rho_{k+1}||_{L^\infty}^{-1}\geq ||\rho_k||_{L^\infty}^{-1}-\lambda\tau\chi.
$$
This, together with \eqref{iterated}, which is true by induction assumption and which reads $||\rho_k||_{L^\infty}^{-1}\geq ||\rho_0||_{L^\infty}^{-1}-\lambda k\tau\chi$, gives
$$||\rho_{k+1}||_{L^\infty}^{-1}\geq ||\rho_0||_{L^\infty}^{-1}-\lambda(k+1)\tau\chi,$$ 
which is the claim.
\end{proof}

%%%%%%%%%%%%%%%%%%

\subsection{Variants: besides the case of Dirichlet conditions on bounded domains} 

We consider here easy variants of our result, which was stated on a bounded domain $\Omega$ with Dirichlet boundary conditions for the equation $-\Delta u=\rho$. Of course, the boundary conditions on the continuity equation are unchanged, they are always of no-flux type.\smallskip

\begin{itemize}
\item[1.] {\it Neumann or periodic boundary conditions.-} We can change the equation relating $u$ to $\rho$ into a PDE with Neumann boundary conditions, either considering
$$
-\Delta u=\rho-c\quad\mbox{ (with $c=1/|\Omega|$)}
$$
or $-\Delta u+u=\rho$, and imposing in both cases $\nabla u\cdot \mathbf n=0$ on $\partial\Omega$. The estimates that we can obtain are exactly the same and the arguments are very similar. The only differences are:
\begin{itemize}
\item When proving that we have $x_0\notin\partial\Omega$, we need to suppose that $\Omega$ is strictly convex, which guarantees $\nabla\phi(x_0)\cdot\mathbf n>0$ and allows to find a contradiction together with $\nabla u\cdot \mathbf n=0$. This assumption can be enforced by approximation, and the result stays true anyway on arbitrary convex domains.
\item It is no more true that we have $-\Delta u(x_0)=\rho(x_0)$, but we have in both cases $-\Delta u(x_0)\leq \rho(x_0)$. Indeed, in the first case we just use $c>0$, and in the second case $u\geq 0$ by the maximum principle.
\end{itemize}
The case of periodic boundary conditions, when $\Omega$ is the flat torus, is even simpler: the equation can be taken as $-\Delta u=\rho-c$
or
$-\Delta u+u=\rho$, and we do not need to distinguish the case $x_0\in \partial\Omega$. Of course, this requires to use regularity results for the optimal transport map on the torus, see \cite{cordero reg}.\smallskip

\item[2.] {\it The whole space case $\Omega=\R^d$.-} In this case define $u$ as
$$
u(x)=-U*\rho:=-\int U(x-y)\rho(y)\dd y
$$
where $U$ is the fundamental solution of the Laplacian, i.e. $\Delta U=\delta_0$, given by  $U(z)=\frac1{2\pi}\log(|z|)$ for $d=2$, and $U(z)=c_d|z|^{2-d}$ for $d>2$. In this way it is still true that we have $-\Delta u=\rho$, but we choose a precise representation formula instead of fixing the boundary conditions. Instead of the Dirichlet energy $-\int |\nabla u|^2$, we use 
$$
H(\rho):=-\int u(x)\rho(x)\dd x=\int\int \rho(x)\rho(y)U(x-y)\dd x \dd y\,.
$$
We use the fact that, in the space $L^1\cap L^\infty$, the energy $H(\rho)$ and the convolution $U*\rho$ satisfy the following estimates: 
\begin{itemize}
\item if $d>2$ then for each sequence $\rho_n$ bounded in $L^1\cap L^\infty$, $u_n:=-U*\rho_n$ is bounded in $L^\infty$ and hence $H(\rho_n)$ is bounded. This is a consequence of a simple splitting of the convolution in near and far field contributions. 

\item if $d=2$ then for each sequence $\rho_n$ bounded in $L^1\cap L^\infty$, $U *\rho_n$ is bounded from below by a uniform constant, again by splitting in near and far field contributions in the convolution and discarding the positive part of the logarithmic kernel. This also implies that $H(\rho_n)$ is bounded from below.
\end{itemize}
Moreover, we need to assume that our initial bounded probability density $g$ also has finite second moment. This is important to show the existence of minimizers, since one can take advantage of the $W_2^2(\rho,g)$ term to make appear the second moment of the minimizing sequence to help to control the negative part of the entropy in case $f(\rho)=\rho\log\rho$ or the negative part of the $f_n$ approximations for degenerate diffusions (in particular, for simplicity, we suppose in the unbounded case that $f$satisfies $f(s)\geq C(s \log (Cs))_-$ for some constant $C$). We can use the Carleman inequality \cite[Lemma 2.2]{BlaCalCar}, ot similar arguments (see \cite{BlaCalCar,BlaCarCar,CDFLS,San-moment}) which allows to show that there is no mass escaping to infinity and that the weak limit of the minimizing sequence as measures is a probability density. In fact, the convergence happens also weakly in $L^1(\R^d)$ by Dunford-Pettis theorem.

Again, in the space $L^1\cap L^\infty$, the energy $H(\rho)$ has good convergence properties: 
\begin{itemize}
\item if $d>2$ then for each sequence $\rho_n$ bounded in $L^1\cap L^\infty$ such that $\rho_n\deb \rho$, in the sense of weak convergence in duality with bounded and continuous functions on $\R^d$, then $H(\rho_n)\to H(\rho)$.

\item if $d=2$ then for each sequence $\rho_n$ bounded in $L^1\cap L^\infty$ such that the second moments are uniformly bounded in $n$ and $\rho_n\deb \rho$ weakly in $L^1(\R^d)$, then $H(\rho_n)\to H(\rho)$, see \cite[Lemma 3.1]{BlaCalCar} for the proof.
\end{itemize}

Then, the proof goes on in the same way, but in the approximation we will suppose that $g$ is a compactly supported density with uniformly convex and smooth support, smooth and bounded away from zero on its support (but discontinuous on the boundary of the support). This allows to apply the regularity theory for the Monge-Amp\`ere equation. More precisely, we need that the optimal transport map between two $C^{0,\alpha}$ probability densities with a source density being strictly positive on $\R^d$ and a target density being bounded away from zero on a bounded smooth convex domain is $C^{1,\alpha}(\R^d)$, and the Kantorovich potential is $C^{2,\alpha}(\R^d)$. This fact, for an unbounded source domain, but keeping boundedness and convexity of the target, is not easy to find in the literature, but can be proven by an easy adaptation of the arguments in \cite[Theorem 4.23, Remark 4.25]{Fig book}. By the way, the fact that the source domain is the whole space makes the proof even shorter. Moreover, in this case $\phi$ grows quadratically, and $u$ is bounded from above, which guarantees that a minimizer for $\phi-\tau\chi u$ exists. 

A final point to observe is that the results in \cite[Chapter 7]{OTAM} about the first variation of the $W_2^2$ cost are only proven in \cite{OTAM} in the case of a compact domain. Yet, generalizing them to the case where $g$ is compactly supported (and the comparison is done with measures $\tilde\rho$ which are compactly supported themselves) is an easy exercise. The rest of the proof is unchanged.
\end{itemize}

%%%%%%%%%%%%%%%%%%%%%%%%%%%%%%%%

\section{Local existence of a solution, for arbitrary non-linearity and $L^\infty$ data}\label{3}

In this section we apply the previous estimates to prove the existence, local in time, of a weak solution of the parabolic-elliptic  Keller-Segel system \eqref{syst}. For simplicity we will start from the case of linear diffusion, and then turn to more arbitrary functions $f$. The proof will only underline the general strategy and the peculiarity of each case.

\

\noindent {\bf Linear diffusion.-} We consider the particular case
\begin{equation}\label{systlin}
\begin{cases}\partial_t \rho+\chi\nabla\cdot(\rho\nabla u)-\Delta\rho=0&\mbox{ in }[0,T]\times \Omega,\\
			-\Delta u=\rho &\mbox{ in $ \Omega$ for every $t\in [0,T]$},\\
			u=0&\mbox{ on $ \partial\Omega$ for every $t\in [0,T]$},\\
			(\rho\nabla u - \nabla\rho)\cdot \mathbf{n}=0&\mbox{ on $[0,T]\times \partial\Omega$},\\
			\rho(0,\cdot)=\rho_0&
			\end{cases}
			\end{equation}

\begin{theorem}\label{exist lin}
Suppose $\Omega$ is a bounded convex domain in $\R^d$. For any $\rho_0\in L^\infty(\Omega)$ and $T<(\chi ||\rho_0||_{L^\infty})^{-1}$, the system \eqref{systlin} admits at least a weak solution on $[0,T)\times\Omega$, and this solution is bounded for all $0\leq t<T$.
\end{theorem}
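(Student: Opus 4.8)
The plan is to pass to the limit $\tau\to0$ in the constrained JKO scheme of the previous section, using the iterated $L^\infty$ estimate \eqref{iterated} as the sole source of compactness. First I fix $T<(\chi\|\rho_0\|_{L^\infty})^{-1}$ and choose $\lambda>1$ close to $1$ together with $t_0\in(T,(\lambda\chi\|\rho_0\|_{L^\infty})^{-1})$ and $\ve_0>0$ so that the hypotheses of Theorem 2 are met; for every $\tau<\ve_0c_0$ this produces the discrete family $\rho_k^\tau$, $k\le\lfloor t_0/\tau\rfloor$, with the uniform bound $\|\rho_k^\tau\|_{L^\infty}\le\ve_0^{-1}$ on a time interval containing $[0,T]$. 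The decisive observation is that once $\tau$ is small enough that $\ve_0^{-1}<M=1/(\chi\tau)$, the density constraint $\rho\le M$ is strictly inactive, so the multiplier $p$ in \eqref{condwithp} vanishes and each $\rho_{k+1}^\tau$ solves the genuine (unconstrained) Euler--Lagrange relation $f'(\rho_{k+1}^\tau)-\chi u_{k+1}^\tau+\phi_{k+1}^\tau/\tau=\mathrm{const}$. In the linear case $f(s)=s\log s$, taking gradients gives $\nabla\phi_{k+1}^\tau/\tau=\chi\nabla u_{k+1}^\tau-\nabla\log\rho_{k+1}^\tau$, so that the displacement $x-T_{k+1}^\tau(x)=\nabla\phi_{k+1}^\tau$ of the optimal map $T_{k+1}^\tau=\mathrm{id}-\nabla\phi_{k+1}^\tau$ (from $\rho_{k+1}^\tau$ to $\rho_k^\tau$) equals $\tau$ times the expected drift $v_{k+1}^\tau:=\chi\nabla u_{k+1}^\tau-\nabla\log\rho_{k+1}^\tau$.

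Next I collect the a priori estimates. From \eqref{iterated} the piecewise-constant interpolation $\rho^\tau$ is bounded in $L^\infty([0,T]\times\Omega)$, hence weakly-$*$ precompact; moreover $-\Delta u^\tau=\rho^\tau$ together with this uniform bound gives $u^\tau$ bounded in $W^{2,p}(\Omega)$ for every $p$, so $\nabla u^\tau$ is precompact and a subsequence converges strongly. The usual minimizing-movement energy inequality yields $\sum_k W_2^2(\rho_{k+1}^\tau,\rho_k^\tau)/\tau\le J(\rho_0)-\inf J<\infty$, which provides H\"older-$\tfrac12$ equicontinuity in time for the $W_2$ distance (hence, by the Ascoli--Arzel\`a theorem in the compact space $(\pical(\Omega),W_2)$, uniform convergence of $\rho^\tau(t)$ for all $t$ along a subsequence) together with an $L^2$ bound on the momentum $\rho^\tau v^\tau$. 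Combining this kinetic bound with the relation $v^\tau=\chi\nabla u^\tau-\nabla\log\rho^\tau$ and the $L^\infty$ bounds on $\rho^\tau$ and $\nabla u^\tau$ converts it into a bound on the Fisher information $\int_0^T\!\!\int_\Omega|\nabla\sqrt{\rho^\tau}|^2$; thus $\sqrt{\rho^\tau}$ is bounded in $L^2(0,T;H^1(\Omega))$ and, being also bounded in $L^\infty$ and time-equicontinuous, converges strongly in $L^2$, so $\rho^\tau\to\rho$ strongly while $\nabla\rho^\tau=2\sqrt{\rho^\tau}\,\nabla\sqrt{\rho^\tau}$ converges weakly.

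Finally I pass to the limit in the weak formulation. For a test function $\xi\in C^\infty([0,T)\times\overline\Omega)$ with $\nabla\xi\cdot\mathbf n=0$ on $\partial\Omega$ I write the discrete continuity equation, expanding $\xi(x)-\xi(T_{k+1}^\tau(x))$ to first order and using $x-T_{k+1}^\tau(x)=\nabla\phi_{k+1}^\tau=\tau v_{k+1}^\tau$: the first-order term reproduces $\int\rho^\tau(\chi\nabla u^\tau-\nabla\log\rho^\tau)\cdot\nabla\xi=\int\chi\rho^\tau\nabla u^\tau\cdot\nabla\xi-\int\nabla\rho^\tau\cdot\nabla\xi$, while the quadratic Taylor remainder is bounded by $\|D^2\xi\|_{L^\infty}\sum_k W_2^2(\rho_{k+1}^\tau,\rho_k^\tau)=O(\tau)$ and vanishes. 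The drift term passes to the limit through the weak-$*$ convergence of $\rho^\tau$ against the strong convergence of $\nabla u^\tau$, the diffusion term through the weak convergence of $\nabla\rho^\tau$, and the discrete time-derivative term through the strong $L^2$ convergence of $\rho^\tau$; this yields exactly the weak form of \eqref{systlin}. The $L^\infty$ bound survives the limit by lower semicontinuity of the $L^\infty$ norm under weak-$*$ convergence, giving $\rho(t,\cdot)\in L^\infty(\Omega)$ with $\|\rho(t,\cdot)\|_{L^\infty}\le\ve_0^{-1}$ for every $t<T$, which also establishes boundedness.

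I expect the main obstacle to be the diffusion term. Since mass may concentrate and create vacuum, there is no uniform positive lower bound on $\rho^\tau$, so $\int|\nabla\rho^\tau|^2$ need not be bounded and one cannot pass to the limit directly in $\nabla\rho^\tau$; the remedy is precisely the Fisher-information/$\sqrt{\rho}$ reformulation described above, which must then be coupled carefully with the control of the time-discretization (Taylor) errors so that the product of the weakly converging $\nabla\sqrt{\rho^\tau}$ with the strongly converging $\sqrt{\rho^\tau}$ identifies the limit of $\nabla\rho^\tau$ correctly. The drift–density coupling, by contrast, is comparatively benign: it is exactly the uniform $L^\infty$ estimate that upgrades $\nabla u^\tau$ to strong convergence and thereby makes the product $\rho^\tau\nabla u^\tau$ pass painlessly to the limit.
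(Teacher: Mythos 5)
Your construction of the discrete scheme, the energy estimate \eqref{discreteH1}, the Wasserstein compactness, and the treatment of the drift term (weak-$*$ convergence of $\bar\rho^\tau$ against uniform convergence of $\nabla u^\tau$ via $W^{2,p}$ bounds) all coincide with the paper's proof. Where you diverge is the diffusion term, and there your proposal both misses the point of the linear case and leaves a real gap. The paper's argument for this term is two lines: since $\Delta$ is a \emph{linear} operator, the weak-$*$ convergence $\bar\rho^\tau_t\deb\rho_t$ already implies $-\Delta\bar\rho^\tau_t\to-\Delta\rho_t$ in the sense of distributions; equivalently, in your weak formulation you may integrate by parts once more and write $\int\nabla\rho^\tau\cdot\nabla\xi=-\int\rho^\tau\,\Delta\xi$ (your test functions satisfy $\nabla\xi\cdot\mathbf n=0$, so there is no boundary term), after which weak-$*$ convergence of $\rho^\tau$ suffices. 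No strong convergence, no Fisher information, and no identification of products of weak limits is needed; this is precisely what distinguishes the linear theorem from the nonlinear one in the paper.

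Your alternative route --- Fisher information plus strong $L^2$ compactness --- is the machinery the paper deploys only for nonlinear diffusion, and as written it has a gap: the step ``bounded in $L^2(0,T;H^1)$, bounded in $L^\infty$, and time-equicontinuous, hence strongly convergent in $L^2$'' is not the standard Aubin--Lions lemma, because your equicontinuity in time holds only in the Wasserstein metric (and only up to jumps of size $C\sqrt\tau$, since $\bar\rho^\tau$ is piecewise constant in time), while the spatial bound is on $\sqrt{\rho^\tau}$, a nonlinear function of the quantity that is equicontinuous. Converting $W_2$-equicontinuity into equicontinuity in a dual Sobolev norm requires the interpolation inequality $\|\mu-\nu\|_{X'}\le\max\{\|\mu\|_{L^\infty},\|\nu\|_{L^\infty}\}^{1/2}W_2(\mu,\nu)$, and one then needs a compactness argument tolerant to the $O(\sqrt\tau)$ jumps; this is exactly the content of the paper's Lemma \ref{strong AL}, whose proof takes a full page. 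So your plan is repairable (apply that lemma with $K(s)=s$, whose hypotheses your estimates do verify), but the compactness assertion cannot be waved through as standard. Finally, your diagnosis of the obstacle is backwards on one point: vacuum is harmless here, since $\int|\nabla\rho^\tau|^2=\int\rho^\tau\,\tfrac{|\nabla\rho^\tau|^2}{\rho^\tau}\le\|\rho^\tau\|_{L^\infty}\int\tfrac{|\nabla\rho^\tau|^2}{\rho^\tau}$, so the $L^\infty$ \emph{upper} bound (which you have) converts the Fisher information bound into a genuine $L^2_tH^1_x$ bound on $\rho^\tau$ itself; a positive lower bound on $\rho^\tau$ would only be needed for the reverse implication.
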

\begin{proof}
The construction follows the same scheme summarized in \cite{DMS} and \cite{GradFlowSurvey}.

By using the modified JKO scheme of the previous section, for each $\tau>0$ we are able to build a sequence $(\rho^{\tau}_k)_k$, iteratively solving \eqref{min}, where we choose $f(\rho)=\rho\log\rho$. the sequence is bounded in $L^\infty(\Omega)$ independently of $\tau$. We also define a sequence of velocities $\mathbf{v}^{\tau}_k=(id-\T)/\tau$, taking as $\T$ the optimal transport from $\rho^\tau _k$ to $\rho^{\tau}_{k-1}$. Now, we notice that the optimality conditions of \eqref{min}, since we know that in the end the constraint $\rho\leq M$ is not binding for $\tau$ small enough due to \eqref{iterated}, may be re-written without the pressure $p$, thus getting 
$$
\mathbf{v}^\tau_k=\frac{\nabla\phi}{\tau}=-\frac{\nabla\rho^\tau _k}{\rho^\tau _k}+\nabla u^\tau _k,
$$
where $u^\tau _k$ is the solution of $-\Delta u=\rho^\tau _k$ with Dirichlet boundary conditions on $ \partial\Omega$.

Then, we build two interpolating curves in the space of measures: 
\begin{itemize}
\item first we can define some piecewise constant curves, i.e. $\overline\rho^\tau_t:=\rho^{\tau}_{k+1}$ for $t\in ]k\tau,(k+1)\tau]$; associated with this curve we also define the velocities $\overline {\mathbf{v}^\tau_t}=\mathbf{v}^{\tau}_{k+1}$ for $t\in ]k\tau,(k+1)\tau]$ and the momentum variable $\overline E^\tau=\overline\rho^\tau \overline{ \mathbf{v}^\tau}$;
\item then, we can also consider  the densities $\widehat \rho^\tau_t$ that interpolate the discrete values $(\rho^\tau_k)_k$ along geodesics:
\begin{equation*}
\widehat \rho^\tau_t = \bigl( id - (k\tau-t) \mathbf{v}^\tau_k \bigr)_{\sharp} \rho^\tau_k,\;\mbox{ for }t\in](k-1)\tau,k\tau[;
\end{equation*}
the velocities $\widehat {\mathbf{v}^{\tau}_t}$ are defined so that $(\widehat \rho^\tau, \widehat{ \mathbf{v}^{\tau}})$ satisfy the continuity equation, taking 
$$\widehat {\mathbf{v}^{\tau}_t}=\mathbf{v}^\tau_t\circ \bigl( id - (k\tau-t) \mathbf{v}^\tau_k  \bigr)^{-1};$$ 
as before, we define: $\widehat E^{\tau}= \widehat \rho^\tau \widehat{ \mathbf{v}^{\tau}}$.
\end{itemize}

After these definitions we look for a priori bounds on the curves and the velocities that we defined.
Note that the optimality properties of the sequence $(\rho^\tau_k)_k$ give
$$
\frac{W_{2}^2(\rho^\tau_k, \rho^\tau_{k-1})}{\tau}\leq J(\rho^\tau_{k-1})-J(\rho^\tau_{k}).
$$
Hence, for $N=\lfloor T/\tau\rfloor$, we get
\begin{equation}\label{discreteH1}
\sum_{k=1}^N\tau\left(\frac{W_{2}(\rho^\tau_k, \rho^\tau_{k-1})}{\tau}\right)^2 \leq J(\rho_0)-J(\rho^\tau_N)\leq C,
\end{equation}
where the constant $C$ is independent of $\tau$, since $J(\rho_0)<+\infty$ and $J$ is bounded from below on the set of measures with density bounded by a given constant.
This is the discrete version of an $L^2$ estimate of the time derivative. As for $\widehat \rho^\tau_t$, it is an absolutely continuous curve in the Wasserstein space and its velocity on the time interval $[(k-1)\tau,k\tau]$ is given by the ratio $W_2(\rho^\tau_{k-1},\rho^\tau_k)/\tau$. Hence, the $L^2$ norm of its velocity on $[0,T]$ is given by
\begin{equation}\label{L2normv}
\int_0^T |(\widehat{\rho}^\tau)'|^2(t) dt=\sum_{k}\frac{W^2_{2}(\rho^\tau_k, \rho^\tau_{k-1})}{\tau}, 
\end{equation}
and, thanks to \eqref{discreteH1}, it admits a uniform bound independent of $\tau$. In our case, thanks to results on the continuity equation and the Wasserstein metric, this metric derivative is also equal to $||\widehat {\mathbf{v}^\tau_t}||_{L^2(\widehat\rho^\tau_t)}$. This gives compactness of the curves $\widehat\rho^\tau$, as well as an H\"older estimate on their variations in time (since $H^1\subset C^{0,1/2}$). The characterization of the velocities $\overline {\mathbf{v}^\tau}$ and $\widehat { \mathbf{v}^\tau}$ allows to deduce bounds on these vector fields from the bounds on $W_2(\rho^\tau_{k-1},\rho^\tau_k)/\tau$.

Considering all these facts, one obtains the following situation.
\begin{itemize}
\item The norm $\int ||\overline{\mathbf{v}_t^\tau}||^2_{L^2(\overline \rho^\tau_t)}dt$  is $\tau$-uniformly bounded.
\item In particular, the above $L^2$ bound is valid in $L^1$ as well, which implies that $\overline E^\tau$ is bounded in the space of measures over $[0,T]\times\Omega$.
\item The very same estimates are true for $\widehat {\mathbf{v}^\tau}$ and $\widehat E^\tau$.
\item The curves $\widehat\rho^\tau$ are bounded in $H^1([0,T],\mathbb W_2(\Omega))$ and hence compact in $C^0([0,T],\mathbb W_2(\Omega))$.
\item Up to a subsequence, one has $\widehat\rho^\tau\to\rho$, as $\tau\to 0$, uniformly according to the $W_2$ distance.
\item From the estimate $W_2(\overline\rho^\tau_t,\widehat\rho^\tau_t)\leq C\tau^{1/2}$ one gets that $\rho^\tau$ converges to the same limit $\rho$ in the same sense.
\item If we denote by $E$ a weak limit of $\widehat E^\tau$, since $(\widehat\rho^\tau,\widehat E^\tau)$ solves the continuity equation, by linearity, passing to the weak limit, also $(\rho,E)$ solves the same equation.
\item It is possible to prove, see \cite[Section 3.2, Step 1]{MRS10} or \cite[Chapter 8]{OTAM}, that the weak limits of $\widehat E^\tau$ and $\overline E^\tau$ are the same.
\item From the bounds in $L^2$ one gets that also the measure $E$ is absolutely continuous w.r.t. $\rho$ and has an $L^2$ density, so that we have for a.e. time $t$ a measure $E_t$ of the form $\rho_tv_t$.
\item It is only left to prove that one has $E_t=-\Delta\rho_t+\nabla\cdot(\rho_t \nabla u_t)$, where $u_t$ is the solution of $-\Delta u=\rho_t$ with Dirichlet boundary conditions on $ \partial\Omega$.\end{itemize}
In this case this last fact is easy to prove. Indeed, from $\bar\rho^\tau_t\deb\rho_t$ one immediately deduces $-\Delta\bar\rho^\tau_t\to-\Delta\rho_t$ (in the sense of distributions). The $L^\infty$ bound implies that the convergence $\bar\rho^\tau_t\deb\rho_t$ is weak in $L^p$, and hence $u(\bar\rho^\tau_t)\deb u(\rho_t)$ weakly in $W^{2,p}$. For $p>d$, this gives uniform convergence $\nabla u(\bar\rho^\tau_t)\to \nabla u(\rho_t)$ and guarantees that also the term $\nabla\cdot (\nabla u(\bar\rho^\tau_t)\bar\rho^\tau_t)$ converges in the sense of distribution (to $\nabla\cdot (\nabla u(\rho_t)\rho_t)$).
\end{proof}

\begin{remark}
The previous proof works in the whole space case almost entirely. Similarly to the proof of the existence of minimizers in the previous section, in order to have a bound from below on $J$, one applies Carleman estimates, for instance \cite[Lemma 2.2]{BlaCalCar}, to use second moments to estimate from below the entropy term $\int \rho\log\rho$. In order to do that, we can use part of the distance $W_2(\rho,\rho_0)$ as in \cite{BlaCalCar,BlaCarCar,CDFLS} or use a sublinear power of the left-hand side of \eqref{discreteH1}. For sharp lower bounds on the entropy in terms of moments, see \cite{San-moment}. Finally, passing to the limit in the equations only requires local convergences instead of global in the whole space, as this is enough for the equations to pass to the limit in the distributional sense.
\end{remark}

\noindent {\bf Non-linear diffusion.-} The problem becomes trickier in case of non-linear diffusions. However, a modification of the above proof, requiring the following lemma, inspired by the well-known Aubin-Lions compactness Lemma (see for instance \cite{Aubin}), allows to handle non-linear cases. The modification concerns the use of the Wasserstein distance and the simplification using the $L^\infty$ estimates, see \cite{MRS10} for similar computations.

\begin{lemma}\label{strong AL}
Suppose that $\mu^\tau$ is a sequence (indexed in $\tau$) of time-dependent probability densities such that
\begin{enumerate}
\item $||\mu^\tau_t||_{L^\infty}$ is bounded
\item $W_2(\mu^\tau_t,\mu^\tau_s)\leq C\sqrt{\tau}+\int_t^s g^\tau (r)dr$, where $g^\tau$ is bounded in $L^2([0,T])$ and $C$ is a constant.
\item For some strictly increasing and convex function $K:[0,+\infty)\to [0,+\infty)$ such that the function $s\mapsto s K^{-1}(s)$ is strictly convex, we have $||\nabla(K(\mu^\tau_t))||_{L^2(\Omega)}\leq Cg^\tau(t)$, for a given constant $C$ and the same function $g^\tau$ as above.
\end{enumerate}
Then, the sequence of  $\mu^\tau$ is compact in all the spaces $L^p$ with $p<\infty$ and in particular $\mu^\tau$ converges a.e. up to subsequences.
\end{lemma}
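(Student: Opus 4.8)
The plan is to prove a generalized Aubin--Lions statement in which the \emph{spatial} compactness is carried by the nonlinear quantity $K(\mu^\tau)$, while the \emph{temporal} equicontinuity is carried by $\mu^\tau$ itself, measured in a negative Sobolev norm. First I would reformulate the three hypotheses. By (1) the densities $\mu^\tau_t$ are uniformly bounded in $L^\infty(\Omega)$, hence, on the bounded set $\Omega$, in $L^2(\Omega)$; since $K$ is continuous, $K(\mu^\tau_t)$ is uniformly bounded in $L^\infty$ as well. Together with (3), which bounds $\|\nabla K(\mu^\tau_t)\|_{L^2}$ by $Cg^\tau(t)$, and the fact that $g^\tau$ is bounded in $L^2([0,T])$, this shows that $K(\mu^\tau)$ is bounded in $L^2([0,T];H^1(\Omega))$, uniformly in $\tau$. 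Thus for a.e.\ $t$ the function $K(\mu^\tau_t)$ lies in a fixed ball of $H^1(\Omega)$ whose radius is square-integrable in time, and such balls are relatively compact in $L^2(\Omega)$ by the Rellich theorem; this is the spatial compactness ingredient.

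Next I would transfer the temporal information in (2) to a linear negative norm. Using the uniform $L^\infty$ bound and the fact that, on a convex domain, the $L^\infty$ norm does not increase along $W_2$ geodesics, Loeper's inequality comparing the $W_2$ distance and the $H^{-1}$ norm (see \cite{Loeper}) gives $\|\mu^\tau_t-\mu^\tau_s\|_{H^{-1}(\Omega)}\le C(M)\,W_2(\mu^\tau_t,\mu^\tau_s)$, with $C(M)$ depending only on the uniform bound $M$ from (1). Combining with (2) and integrating in time (with Fubini for the double time integral), I would obtain
\[
\int_0^{T-h}\|\mu^\tau_{t+h}-\mu^\tau_t\|_{H^{-1}}\,dt\le C\left(T\sqrt{\tau}+h\sqrt{T}\,\|g^\tau\|_{L^2([0,T])}\right).
\]
The term $C\sqrt{\tau}$, coming from the piecewise-constant nature of the scheme, is harmless: since the statement concerns compactness as $\tau\to0$, one sends $\tau\to0$ for fixed $h$ first, so that $\limsup_{\tau\to0}\int_0^{T-h}\|\mu^\tau_{t+h}-\mu^\tau_t\|_{H^{-1}}\,dt\le Ch$, which tends to $0$ as $h\to0$. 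This is the integral time-equicontinuity ingredient.

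I would then invoke the generalized Aubin--Lions (integral equicontinuity) compactness scheme, in the spirit of \cite{Aubin} and as used in \cite{MRS10}, with base space $B=L^1(\Omega)$, compactness functional $\mathcal{F}(\rho):=\|\rho\|_{L^\infty}+\|\nabla K(\rho)\|_{L^2}$, and the pseudo-distance $\mathsf{d}(\rho,\sigma):=\|\rho-\sigma\|_{H^{-1}}$. Two structural facts must be checked. The sublevels $\{\mathcal{F}\le R\}$ are relatively compact in $L^1(\Omega)$: along a sequence with $\|\rho_n\|_{L^\infty}\le R$ and $\|\nabla K(\rho_n)\|_{L^2}\le R$, the bounded $H^1$-sequence $K(\rho_n)$ converges, up to subsequence, in $L^2$ and a.e.; since $K$ is strictly increasing and continuous, $K^{-1}$ is continuous, whence $\rho_n=K^{-1}(K(\rho_n))$ converges a.e., and the uniform bound $\rho_n\le R$ upgrades this to convergence in $L^1$ by dominated convergence. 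The separation property also holds, since $\rho,\sigma\in L^2\hookrightarrow H^{-1}$ and $\|\rho-\sigma\|_{H^{-1}}=0$ forces $\rho=\sigma$ a.e. With the uniform bound $\sup_\tau\int_0^T\mathcal{F}(\mu^\tau_t)\,dt<\infty$ from the first paragraph and the $H^{-1}$-equicontinuity from the second, the framework yields relative compactness of $\{\mu^\tau\}$ as measurable curves valued in $L^1(\Omega)$; the uniform $L^\infty$ bound makes the family uniformly integrable, so the convergence is strong in $L^1([0,T]\times\Omega)$ along a subsequence. Finally, interpolating this $L^1$ convergence against the uniform $L^\infty([0,T]\times\Omega)$ bound, via $\|\mu^\tau-\mu\|_{L^p}^p\le (2M)^{p-1}\|\mu^\tau-\mu\|_{L^1}$, gives convergence in every $L^p$ with $p<\infty$, and a.e.\ convergence along a further subsequence.

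The main obstacle is precisely the mismatch of regularities: the strong Sobolev control sits on $K(\mu^\tau)$, while the equicontinuity in time sits on $\mu^\tau$, with the merely monotone nonlinear map $K$ interposed between them. This is what forbids a direct application of the classical Aubin--Lions lemma, and what the choice of $\mathcal{F}$ built on $K$, together with the linear pseudo-distance $\|\cdot\|_{H^{-1}}$, is designed to reconcile; the invertibility of $K$ enters decisively both in the sublevel compactness (through continuity of $K^{-1}$) and, trivially, in the separation property. I would also remark that only the monotonicity and continuity of $K$, hence of $K^{-1}$, are actually used in this compactness argument, whereas the stronger structural assumption that $s\mapsto sK^{-1}(s)$ be strictly convex is not needed here; it is rather used downstream, when passing to the limit in and identifying the nonlinear diffusion flux.
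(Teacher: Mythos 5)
Your proposal is correct, but it takes a genuinely different route from the paper's own proof. The paper never invokes an Aubin--Lions-type theorem as a black box: it extracts weak-* limits $A$ and $B$ of $K(\mu^\tau)$ and of the product $\mu^\tau K(\mu^\tau)$, uses exactly the two ingredients you isolate (the $H^1$ bound on $K(\mu^\tau_t)$ and the interpolation $\|\mu-\nu\|_{X'}\le\max\{\|\mu\|_{L^\infty},\|\nu\|_{L^\infty}\}^{1/2}W_2(\mu,\nu)$ from \cite{Loeper,MRS10,OTAM}) to prove that at a.e.\ time one has $\int_\Omega B\,\dd x=\int_\Omega \mu_a A\,\dd x\le\int_\Omega K^{-1}(A)A\,\dd x$, and then concludes by a Minty-type convexity argument: since $E(s)=sK^{-1}(s)$ is strictly convex, the weak convergence $K(\mu^\tau)\deb A$ together with $\limsup_\tau\int E(K(\mu^\tau))\le\int E(A)$ forces $K(\mu^\tau)\to A$ a.e., hence $\mu^\tau=K^{-1}(K(\mu^\tau))\to K^{-1}(A)$ a.e. Your route packages the same two a priori estimates into a generalized Aubin--Lions theorem (tightness through a coercive functional, plus integral equicontinuity with respect to a compatible pseudo-distance); your verifications of sublevel compactness in $L^1$ (Rellich plus continuity of $K^{-1}$) and of separation are the right ones, and your handling of the $C\sqrt\tau$ term (send $\tau\to0$ at fixed $h$, then $h\to0$) is precisely what the relaxed equicontinuity hypothesis of that theorem permits. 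Two caveats are worth recording. First, the result you invoke is not the classical lemma of \cite{Aubin}, nor is it stated in \cite{MRS10}: it is the Rossi--Savar\'e theorem on tightness and integral equicontinuity for evolution problems, which is absent from the paper's bibliography and should be cited precisely, and its remaining hypotheses (lower semicontinuity on $L^1$ of your functional $\mathcal{F}$ and of the $H^{-1}$ pseudo-distance) should be verified explicitly, even though both are immediate. Second, and more interestingly, your closing observation is correct and nontrivial: your argument never uses the strict convexity of $s\mapsto sK^{-1}(s)$, only the strict monotonicity and continuity of $K$, whereas the paper's proof consumes that hypothesis in its final convexity step. Since that hypothesis is exactly what makes the construction in Lemma \ref{Lemma h} delicate (the sequences $b_n$, $p_n$, $I_n$ and conditions \eqref{crazy1}--\eqref{crazy4}), your route would prove a formally stronger lemma and would reduce Lemma \ref{Lemma h} to its easy first paragraph --- the price being reliance on an external compactness theorem where the paper is self-contained.
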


Before proving this lemma, we apply it to the case we are interested in.

\begin{theorem}
Suppose that $\Omega$ is a bounded convex domain in $\R^d$. Suppose that $f:\R\to\R$ is a continuous function on $[0,+\infty[$ and $C^2$ on $]0,+\infty[$, with $f''>0$. Then, for any $\rho_0\in L^\infty(\Omega)$ and $T<(\chi||\rho_0||_{L^\infty})^{-1}$, the system \eqref{syst} admits at least a weak solution on $[0,T]\times\Omega$, and this solution is bounded.
\end{theorem}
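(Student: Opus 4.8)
The plan is to repeat the construction of Theorem \ref{exist lin} almost verbatim, replacing the linear diffusion term by the nonlinear one $-\nabla\cdot(\rho\nabla f'(\rho))=-\Delta\Psi(\rho)$, where $\Psi'(s)=sf''(s)$, and injecting the compactness of Lemma \ref{strong AL} at the single place where the linear argument genuinely used linearity. First I would run the constrained JKO scheme with the given $f$: for each $\tau$, iteratively solve \eqref{min} with $g=\rho^\tau_k$, producing $(\rho^\tau_k)_k$. Since $f''>0$, $f$ is admissible, and the $L^\infty$ estimate \eqref{iterated} holds (using the particular minimizers coming from the approximation $f_n=f+n^{-1}s\log s$ when $\inf_s sf''(s)>0$ is not assumed), so $\|\rho^\tau_k\|_{L^\infty}$ is bounded uniformly in $\tau,k$ on $[0,T]$. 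I then build the piecewise-constant and geodesic interpolations $\overline\rho^\tau,\widehat\rho^\tau$ with their momenta $\overline E^\tau,\widehat E^\tau$. Writing the optimality conditions without the (ultimately inactive) pressure gives the velocity $\mathbf{v}^\tau_k=\nabla\phi/\tau=-\nabla f'(\rho^\tau_k)+\chi\nabla u^\tau_k$ and hence the flux $E^\tau_k=\rho^\tau_k\mathbf{v}^\tau_k=-\nabla\Psi(\rho^\tau_k)+\chi\rho^\tau_k\nabla u^\tau_k$, while the energy-dissipation inequality \eqref{discreteH1} furnishes, exactly as in the linear case, a function $g^\tau$ bounded in $L^2([0,T])$ with $\|\mathbf{v}^\tau_t\|_{L^2(\rho^\tau_t)}\leq g^\tau(t)$ and the $W_2$-equicontinuity required in hypothesis (2) of the lemma.

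The one genuinely new step is obtaining strong convergence of $\rho^\tau$, which is indispensable because $-\Delta\Psi(\rho)$ is nonlinear. For this I would feed the scheme into Lemma \ref{strong AL} with $K=\Psi$. Hypothesis (1) is the uniform $L^\infty$ bound, hypothesis (2) is the $W_2$-equicontinuity above, and hypothesis (3) is the spatial gradient bound: from $\nabla\Psi(\rho^\tau_t)=\chi\rho^\tau_t\nabla u^\tau_t-E^\tau_t$ one estimates $\|\nabla\Psi(\rho^\tau_t)\|_{L^2}\leq \chi\|\rho^\tau_t\|_{L^\infty}\|\nabla u^\tau_t\|_{L^2}+\|E^\tau_t\|_{L^2}$, and since $\|E^\tau_t\|_{L^2}^2=\int\rho^2|\mathbf{v}|^2\leq \|\rho^\tau_t\|_{L^\infty}\|\mathbf{v}^\tau_t\|_{L^2(\rho^\tau_t)}^2$, the right-hand side is controlled by $C(g^\tau(t)+1)$, which I may absorb into $g^\tau$ since constants lie in $L^2([0,T])$. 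The lemma then yields $\rho^\tau\to\rho$ strongly in every $L^p$, $p<\infty$, hence a.e. up to subsequences.

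With strong convergence in hand the passage to the limit mirrors the end of Theorem \ref{exist lin}. Compactness of $\widehat\rho^\tau$ in $C^0([0,T],\mathbb{W}_2)$ together with $W_2(\overline\rho^\tau_t,\widehat\rho^\tau_t)\leq C\tau^{1/2}$ gives a common limit $\rho$, and the weak limit $E$ of $\overline E^\tau$ (equal to that of $\widehat E^\tau$) solves the continuity equation with $\rho$. Using a.e. convergence and the $L^\infty$ bound, $\Psi(\overline\rho^\tau)\to\Psi(\rho)$ strongly, so $-\nabla\Psi(\overline\rho^\tau)\rightharpoonup-\nabla\Psi(\rho)$ weakly in $L^2$, while the drift $\chi\overline\rho^\tau\nabla u^\tau\to\chi\rho\nabla u$ exactly as in the linear case ($u^\tau\to u$ in $W^{2,p}$, $\nabla u^\tau\to\nabla u$ uniformly for $p>d$, tested against the strongly converging $\overline\rho^\tau$). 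Identifying $E=-\nabla\Psi(\rho)+\chi\rho\nabla u$ shows that $(\rho,E)$ solves \eqref{syst} weakly, and $\rho$ inherits the $L^\infty$ bound on $[0,T)$.

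The hard part is the verification of the hypotheses of Lemma \ref{strong AL}, and in particular the structural requirements on $K$: strict monotonicity of $\Psi$ follows from $f''>0$, but convexity of $\Psi$ and strict convexity of $s\mapsto s\Psi^{-1}(s)$ are not automatic from $f''>0$ alone (they hold for porous-medium type $f$). I expect this to be handled either by imposing these conditions on the admissible $f$, or by exploiting the freedom to replace $\Psi$ by any convex, strictly increasing $K$ with $K'\leq C\Psi'$ on the range $[0,\|\rho\|_{L^\infty}]$ of the densities, which already suffices for hypothesis (3) since then $|\nabla K(\rho)|\leq C|\nabla\Psi(\rho)|$ pointwise; such a $K$ can be built as a non-decreasing minorant of $C\Psi'$. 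A secondary technical point is the control of boundary terms when integrating by parts to derive the flux identity and the gradient bound, together with the double limit ($n\to\infty$ in $f_n$, then $\tau\to0$) needed when $\inf_s sf''(s)>0$ is not assumed.
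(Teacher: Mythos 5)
Your architecture is exactly the paper's: the constrained JKO scheme with the iterated $L^\infty$ estimate \eqref{iterated}, the two interpolations with the dissipation bound \eqref{discreteH1}, and an application of Lemma \ref{strong AL} to upgrade the weak convergence of $\bar\rho^\tau$ to a.e.\ convergence, which is what lets $\Delta(\Psi(\bar\rho^\tau))$ pass to the limit. Your verifications of hypotheses (1) and (2) coincide with the paper's, and your verification of hypothesis (3) via the flux identity $E^\tau=-\nabla\Psi(\bar\rho^\tau)+\chi\bar\rho^\tau\nabla u^\tau$ and the bound $\|E^\tau_t\|_{L^2}^2\leq\|\bar\rho^\tau_t\|_{L^\infty}\|\mathbf{v}^\tau_t\|^2_{L^2(\bar\rho^\tau_t)}$ is only a cosmetic (unweighted) variant of the paper's pointwise estimate $|\nabla f'(\bar\rho^\tau)|\leq C+|\nabla\phi|/\tau$ taken in $L^2(\bar\rho^\tau)$; either one reduces hypothesis (3) to producing a function $K$ with $K'\leq C\Psi'$ on $[0,M]$ that satisfies the structural requirements of the lemma.

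The genuine gap is precisely the point you defer at the end. You correctly observe that convexity of $\Psi$ and strict convexity of $s\mapsto s\Psi^{-1}(s)$ do not follow from $f''>0$, but your proposed fix --- ``such a $K$ can be built as a non-decreasing minorant of $C\Psi'$'' --- is not correct as stated. Taking $k=K'$ non-decreasing gives convexity of $K$, but not strict convexity of $E(s)=sK^{-1}(s)$: since $E'(K(s))=s+K(s)/K'(s)$, strict convexity of $E$ is equivalent to $K K''<2|K'|^2$, a quantitative restriction on how fast $k$ may grow relative to the size of $K$. For example, $k=\ve$ on $(0,1]$ and $k=1$ on $(1,M]$ is a non-decreasing minorant (e.g.\ of $\Psi'\equiv 1$, the linear-diffusion case), yet $E'$ jumps downward from $2$ to $1+\ve$ across the value $K(1)=\ve$, so $E$ is not convex. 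Moreover $\Psi'(s)=sf''(s)$ may tend to $0$ as $s\to 0$ (porous-medium-type $f$), so the minorant must be constructed across infinitely many scales near the origin while keeping $KK''<2|K'|^2$ throughout; this is exactly the content of the paper's Lemma \ref{Lemma h}, whose proof is a genuinely delicate iteration (the sequences $b_n,p_n,I_n$ subject to \eqref{crazy1}--\eqref{crazy4}). Your alternative fallback, restricting the class of admissible $f$, would prove a strictly weaker statement than the theorem, which assumes only $f''>0$. So the strategy and all the surrounding estimates are right, but the one nontrivial ingredient of the nonlinear case beyond Lemma \ref{strong AL} --- the existence of an admissible $K$ below an arbitrary positive continuous $k_0$ --- is asserted rather than proved, and the reason you give for it would fail.
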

\begin{proof}
We follow the same proof as in Theorem \ref{exist lin}. The only difficulty left is to pass to the limit the term $\nabla\cdot(\bar\rho^\tau_t\nabla f'(\bar\rho^\tau_t)),$ as it has no more the linear form as it had before. Yet, this term can be written as $\Delta(\Psi(\bar\rho^\tau_t))$, where $\Psi(t)=tf'(t)-f(t)$. We know that we have weak convergence $\bar\rho^\tau_t\deb \rho_t$, and we just need to turn it into a.e. convergence to prove the desired convergence. This can be done by means of Lemma \ref{strong AL}, applied to $\mu^\tau=\bar\rho^\tau$. We need to check the assumptions of this lemma. Assumption (1) is guaranteed by our iterated $L^\infty$ estimate, and we call $M$ a constant such that $\bar\rho^\tau\leq M$ for all $\tau$. 

For assumptions (2) and (3), we define $g^\tau(t):=||\overline{\mathbf{v}_t^\tau}||_{L^2(\overline \rho^\tau_t)}+C$, for a suitable large constant $C$. Then, the integrability of $g^\tau$ comes from \eqref{L2normv}, and we have 
$$
W_2(\overline\rho^\tau_t,\overline\rho^\tau_s)\leq W_2(\overline\rho^\tau_t,\widehat\rho^\tau_t)+W_2(\overline\rho^\tau_s,\widehat\rho^\tau_s)+W_2(\widehat\rho^\tau_t,\widehat\rho^\tau_s)\leq C\tau^{1/2}+\int_t^s g^\tau(r)dr.
$$ 
In order to check the validity of assumption (3), we first choose, using the following Lemma \ref{Lemma h}, a function $k$ such that its antiderivative $K$, with $K(0)=0$, satisfies the requirements of assumption (3), and such that $k(s)\leq s^{1/2}f''(s)$ for all $s\in [0,M]$. This means that we have 
$$
||\nabla K(\mu^\tau_t)||_{L^2(\Omega)}^2= \int_\Omega k(\bar\rho^\tau)^{2}|\nabla \bar\rho^\tau|^2\leq \int_\Omega \bar\rho^\tau|f''( \bar\rho^\tau)|^2|\nabla \bar\rho^\tau|^2,
$$
but this last quantity is the squared norm in $L^2( \bar\rho^\tau)$ of $\nabla(f'( \bar\rho^\tau))$. From the optimality conditions defining $ \bar\rho^\tau$, we have 
$$
|\nabla f'( \bar\rho^\tau)|=\left|\nabla u+\frac{\nabla\phi}{\tau}\right|\leq C+\left|\frac{\nabla\phi}{\tau}\right|,
$$
where the constant $C$ is chosen so as to be larger than the $L^\infty$ norm of $\nabla u$ (which is bounded by the $L^\infty$ norm of $ \bar\rho^\tau$).
If we take the norms in $L^2( \bar\rho^\tau)$, we exactly get $||\nabla K(\mu^\tau_t)||_{L^2(\Omega)}\leq Cg^\tau(t)$. This concludes the proof.
\end{proof}

\begin{lemma}\label{Lemma h}
Given any strictly positive and continuous function $k_0:(0,M]\to (0,\infty)$, there exists a function $k:(0,M]\to (0,\infty)$ with $k\leq k_0$ such that
\begin{itemize}
\item $k$ is non-decreasing and $K$, its antiderivative defined through $K(s)=\int_0^s k$, is strictly increasing and convex;
\item the map $s\mapsto E(s):=sK^{-1}(s)$ is strictly convex.
\end{itemize}
\end{lemma}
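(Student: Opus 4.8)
The plan is to collapse the two requirements into a single monotonicity condition on $K$ and then to build $K$ by an explicit exponential formula.

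\emph{Reduction.} Since we want $k$ non-decreasing with $k\le k_0$, I would first replace $k_0$ by its largest non-decreasing minorant $\tilde k_0(t):=\min_{\sigma\in[t,M]}k_0(\sigma)$, which is continuous, strictly positive and non-decreasing on $(0,M]$; it then suffices to produce $k$ with $0<k\le\tilde k_0$. With $k$ non-decreasing the antiderivative $K$ is automatically strictly increasing (as $k>0$) and convex, so the first bullet holds. For the second, differentiating the identity $E(K(t))=tK(t)$ gives
$$E'(K(t))=t+\frac{K(t)}{k(t)},$$
and, as $s\mapsto K^{-1}(s)$ is increasing, $E$ is strictly convex if and only if $G(t):=t+K(t)/k(t)$ is strictly increasing. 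Because the summand $t$ is already strictly increasing, it is enough to have $K/k$ non-decreasing, i.e. $(\log K)'=k/K$ non-increasing, i.e. $\log K$ concave.

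\emph{Construction.} I would then seek $K$ directly in the form
$$K(t)=c\,\exp\!\left(-\int_t^{M}\frac{d\sigma}{w(\sigma)}\right),$$
with $c>0$ a small constant and $w\colon(0,M]\to(0,\infty)$ continuous, non-decreasing, with $w'\le1$ and $w(\sigma)\le\sigma$ near $0$. For this ansatz $(\log K)'=1/w$ is non-increasing, so $\log K$ is concave and the second bullet is satisfied; $w(\sigma)\le\sigma$ makes $\int_{0^+}w^{-1}=+\infty$, hence $K(0)=0$; and since $k=K'=K/w$ one computes $k'=K(1-w')/w^2\ge0$, so $k$ is non-decreasing precisely because $w'\le1$. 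What remains is the single inequality $k=K/w\le\tilde k_0$, equivalently
$$\int_t^{M}\frac{d\sigma}{w(\sigma)}\ \ge\ \log\frac{c}{w(t)\,\tilde k_0(t)}\qquad\text{for all }t\in(0,M].$$

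\emph{Main obstacle.} The crux is exactly this last inequality, because $K$ depends on $w$ non-locally and one cannot shrink $k$ pointwise. The point to exploit is that the left-hand side accumulates like $\int_t^{M}w^{-1}$ while the right-hand side grows only like $\log(1/w(t))+\log(1/\tilde k_0(t))$; differentiating in $t$, one sees the inequality is guaranteed (after fixing $c$ small so that it holds at $t=M$, where $\tilde k_0(M)=k_0(M)>0$) as soon as $(1-w')/w\ge(\log\tilde k_0)'$, which one can force by taking $w$ small enough near the origin, for instance $w'\le\tfrac12$ together with $w\le\tilde k_0/(2\,\tilde k_0')$ on the set where $\tilde k_0$ increases. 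Matching the vanishing rate of $w$ to the (possibly extremely fast) decay of $\tilde k_0$ at $0$ is the only genuinely delicate step; I expect to handle the mild irregularity of the envelope $\tilde k_0$ either by smoothing it from below or, more robustly, by defining $K$ as the maximal function with $\log K$ concave and $K'\le\tilde k_0$ and verifying the required properties on that envelope.
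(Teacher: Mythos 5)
Your reduction coincides with the paper's own first step: both differentiate $E\circ K$ and reduce strict convexity of $E$ to the strict monotonicity of $t\mapsto t+K(t)/k(t)$; your sufficient condition, concavity of $\log K$ (i.e. $KK''\le |K'|^2$), is a slightly stronger variant of the paper's $KK''<2|K'|^2$. Your construction, however, via the ansatz $K(t)=c\exp\bigl(-\int_t^M \dd\sigma/w(\sigma)\bigr)$, is genuinely different from the paper's, which glues power functions $b_n(s/\ell_n)^{p_n}$ on intervals $(\ell_{n+1},\ell_n]$ with recursively chosen exponents. But there is a real gap exactly at the step you yourself call the crux. Your sufficient condition for $k\le\tilde k_0$ is the pointwise inequality $(1-w')/w\ge(\log\tilde k_0)'$, which with $w'\le\frac12$ amounts to $w\le\tilde k_0/(2\tilde k_0')$; yet $\tilde k_0$ is only continuous and non-decreasing and can have interior points of infinite steepness, in which case no strictly positive non-decreasing $w$ satisfies this. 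Concretely, take $M=1$ and $k_0(t)=1+\sqrt{(t-1/2)_+}$, which equals its own non-decreasing minorant: then $(\log\tilde k_0)'(t)\to+\infty$ as $t\downarrow 1/2$, so your inequality forces $w(t)\to 0$ as $t\downarrow 1/2$, contradicting $w(t)\ge w(1/2)>0$. Hence ``smoothing from below'' is not optional polish for mild irregularity; without it the argument fails. Your fallback --- taking the maximal $K$ with $\log K$ concave and $K'\le\tilde k_0$ --- does not rescue this: since $\log\sup_i K_i=\sup_i\log K_i$ and a supremum of concave functions need not be concave, the class you maximize over is not stable under suprema, and there is no reason the putative maximal envelope has non-decreasing derivative.

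The gap is fixable within your scheme, and the repaired proof is arguably cleaner than the paper's. Replace $\tilde k_0$ by the inf-convolution $\hat k_0(t):=\inf_{s\in(0,M]}\bigl(\tilde k_0(s)+L|t-s|\bigr)$ for a fixed $L>0$: this is an $L$-Lipschitz, non-decreasing minorant of $\tilde k_0$, and it is strictly positive (if $\hat k_0(t)=0$ there would be $s_n$ with $\tilde k_0(s_n)+L|t-s_n|\to 0$, forcing $s_n\to t$ and, by continuity, $\tilde k_0(t)=0$, a contradiction). Then $0\le\hat k_0'\le L$ a.e., so it suffices to have $w\le\hat k_0/(2L)$, and $w(t):=\min\{t/2,\,\hat k_0(t)/(2L)\}$ is positive, non-decreasing, $\frac12$-Lipschitz, satisfies $w(t)\le t$, and fulfills your differential inequality with $\hat k_0$ in place of $\tilde k_0$; running your ansatz with this $w$ and $c$ small enough (so that the inequality holds at $t=M$) then yields $k=K/w\le\hat k_0\le k_0$ and completes the proof. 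As written, though, the proposal is incomplete at its decisive step.
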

\begin{proof}
In order to obtain a non-decreasing function $k\leq k_0$ it is enough to consider a strictly decreasing sequence $\ell_n\to 0$ with $\ell_0=M$ and define $k(s)=a_n:=\inf_{[\ell_{n+1},M]} h_0$ for all $s\in (\ell_{n+1},\ell_n]$. This value is strictly positive since $h_0$ is continuous and strictly positive. Automatically, if $k$ is strictly positive and non-decreasing, then $K$ is strictly increasing and convex. 

Then, we need to arrange this function so as to satisfy the second condition as well. For this we will choose a new function $k$, smaller than the one that we have just defined (i.e., smaller than $a_n$ on each interval $(\ell_{n+1},\ell_n]$), and locally Lipschitz. In this way $K$ will be locally $C^{1,1}$ and it is not difficult to check that the strict convexity of $E$ is satisfied whenever we have $K K''<2 |K'|^2$. In order to be convinced of this, just notice that $E'(s)=K^{-1}(s)+s/K'(K^{-1}(s))$; we want $E'$ to be strictly increasing, which is equivalent to $E'\circ K$ to be strictly increasing, hence we need to look at $s\mapsto s+K(s)/K'(s)$. If we differentiate it we obain $2-KK''/|K'|^2$.

We will define $k$ iteratively on each interval $(\ell_{n+1},\ell_n]$ in the following way: define three sequences of numbers $b_n, p_n, I_n>0$ such that
\begin{subequations} 
\begin{equation}\label{crazy1}
b_{n+1}=b_n (\ell_{n+1}/\ell_n)^{p_n}\,,
\end{equation}
\begin{equation}\label{crazy2}
b_n\leq a_n\,,
\end{equation}
\begin{equation}\label{crazy3}
I_np_n(\ell_n/\ell_{n+1})^{p_n}/(b_n\ell_{n+1})\leq 1\,,
\end{equation}
and
\begin{equation}\label{crazy4}
\ell_{n}b_{n}/(p_{n}+1)+b_{n}\ell_{n+1}(\ell_{n+1}/\ell_n)^{p_{n}}\leq I_{n-1}\,.
\end{equation}
\end{subequations}
In order to do so, fix $b_0=a_0, p_{-1}=1$ and $I_{-1}=2a_0M$. Then, if $(b_{n}, p_{n-1},I_{n-1})$ are given, we then choose $p_n$ large enough so that, when $b_{n+1}$ is defined using \eqref{crazy1}, both \eqref{crazy2} and \eqref{crazy4} are satisfied. This defines $p_n$ and $b_{n+1}$. We then choose $I_n$ small enough to that \eqref{crazy3} is satisfied, and iterate.

Once these numbers $(b_n, p_n, I_n)_n$ are fixed, we define $k(s)=b_n (s/\ell_n)^{p_n}\leq b_n\leq a_n$ for $s\in (\ell_{n+1},\ell_n]$. In this way for every $n$ we have $k(\ell_n)=b_n$. Moreover, the function $h$ defined in this way is strictly increasing, satisfies $k\leq k_0$, it is locally Lipschitz continuous, continuity at the points $\ell_n$ is guaranteed by \eqref{crazy1}, and we have, for $s\in (\ell_{n+1},\ell_n]$
\begin{align*}
K(s)&=\int_0^s k(r)\dd r\leq b_{n+2}\ell_{n+2}+\int_{\ell_{n+2}}^{\ell_{n+1}}k(r)\dd r+\int_{\ell_{n+1}}^{s}k(r)\dd r\\
&\leq b_{n+2}\ell_{n+2}+\int_0^{\ell_{n+1}} b_{n+1} (r/\ell_{n+1})^{p_{n+1}}\dd r+\int_0^{s} b_{n} (r/\ell_{n})^{p_{n}}\dd r\\
%&=&b_{n+2}\ell_{n+2}+\frac {\ell_{n+1}b_{n+1}}{p_{n+1}+1}+\frac {b_{n}s^{p_n+1}}{(p_{n}+1)(\ell_n)^{p_n}}\\
&= b_{n+1}(\ell_{n+2}/\ell_{n+1})^{p_{n+1}}\ell_{n+2}+\frac {\ell_{n+1}b_{n+1}}{p_{n+1}+1}+\frac {b_{n}s^{p_n+1}}{(p_{n}+1)(\ell_n)^{p_n}} \leq  I_n+\frac {b_{n}s^{p_n+1}}{(p_{n}+1)(\ell_n)^{p_n}}\,,
\end{align*}
where we used \eqref{crazy1} to pass from the second to the third line and \eqref{crazy4} for the last inequality. Now, in order to verify that we have $K K''<2 |K'|^2$, we just need to check that we have, for $s\in (\ell_{n+1},\ell_n]$,
$$
\left(I_n+\frac{1}{p_n+1}b_n \frac{s^{p_n+1}}{\ell_n^{p_n}}\right) p_nb_n \frac{s^{p_n-1}}{\ell_n^{p_n}}\left(b_n \frac{s^{p_n}}{\ell_n^{p_n}}\right)^{-2}<2,
$$
where we used the exact expression of $K'=k$ and $K''=k'$ on such an interval. Yet, the left hand side may be re-written as 
$$
\frac{p_n}{p_n+1}+I_np_n \frac{\ell_n^{p_n}}{b_ns^{p_n+1}} < 2,
$$
due to \eqref{crazy3}.
\end{proof}

We now prove Lemma \ref{strong AL}.
\begin{proof}
First of all we assume, up to subsequences, that the sequence $\mu^\tau$ converges to a function $\mu$, $K(\mu^\tau)$ to a function $A$ and $\mu^\tau K(\mu^\tau)$ to a function $B$, all the convergences being weak-* in $L^\infty([0,T]\times\Omega)$. We now fix two instants of time $a<b\in [0,T]$ and consider
$$
\fint_a^b\int_\Omega\mu^\tau K(\mu^\tau )\,\dd x \dd t\to \fint_a^b\int_\Omega B\,\dd x \dd t.
$$
Yet, we can also write
$$
\int_\Omega\mu^\tau_t K(\mu^\tau_t )\dd x=\int_\Omega K(\mu^\tau_t )\mu^\tau_a\dd x+\int_\Omega K(\mu^\tau_t )(\mu^\tau_t-\mu^\tau_a)\dd x.
$$
We use the $H^1$ bound on $K(\mu^\tau )$ to write
$$
\left|\int_\Omega K(\mu^\tau )(\mu^\tau_t-\mu^\tau_a)\dd x\right|\leq g^\tau(t)||\mu^\tau_t-\mu^\tau_a||_{X'},
$$
where $X$ is the Hilbert space of zero-mean $H^1$ functions endowed with the $L^2$ norm of the gradient, and $X'$ is its dual. It is well-known, see \cite{Loeper,MRS10} and \cite[Section 5.5]{OTAM}, that we have the interpolation estimate
$$
||\mu-\nu||_{X'}\leq \max\{||\mu||_{L^\infty},||\nu||_{L^\infty}\}^{1/2}W_2(\mu,\nu).
$$
Defining $G:[a,b]\to\R$ through $G(a)=0$ and $G'=g^\tau$, and using $W_2(\mu^\tau_t,\mu^\tau_a)\leq C\sqrt{\tau}+G(t)$, we get
$$\left|\fint_a^b\int_\Omega \mu^\tau K(\mu^\tau )\dd x \dd t- \fint_a^b\int_\Omega K(\mu^\tau_t )\mu^\tau_a\dd x \dd t\right|\leq \frac{C\sqrt{\tau}}{b-a}G(b)+\frac{1}{b-a}G^2(b).$$
We use $G^2(b)=|\int_a^b g^\tau(r)dr|^2\leq (b-a)\int_a^b g^\tau(r)^2dr$ and we define $\sigma^\tau$ to be the measure on $[0,T]$ with density $(g^\tau)^2$. Suppose (up to subsequences) that $\sigma^\tau$ weakly converges to a measure $\sigma$.
If we pass to the limit this inequality as $\tau\to 0$ we get
$$\left| \fint_a^b\int_\Omega B\,\dd x \dd t- \fint_a^b\int_\Omega \mu A\,\dd x \dd t\right|\leq \sigma([a,b]).$$
Note that in the above limit we could pass to the limit the term 
$$ \fint_a^b\int_\Omega K(\mu^\tau_t )\mu^\tau_a\dd x \dd t=\int_\Omega\mu^\tau_a\left(\fint_a^b K(\mu^\tau_t )\dd t\right)$$
since the function $x\mapsto \fint_a^b K(\mu^\tau_t(x) )\dd t$ is bounded in $H^1$ for fixed $a<b$. This means that it converges strongly and can be multiplied times the weak convergence of $\mu^\tau_a$, which converges to $\mu_a$ thanks to the continuity bound on $t\mapsto \mu^\tau_t$.

Now, for every $a$ which is a Lebesgue point for $t\mapsto (A,B)\in L^2(\Omega)^2$ and which is not an atom of $\sigma$, we get
$$\int_\Omega B(x,a)\dd x=\int_\Omega \mu_a A(x,a)\dd x\leq \int_\Omega  K^{-1}(A)A\,\dd x,$$
where we use the fact that $\mu^\tau=K^{-1}(K(\mu^\tau))$, and the function $s\mapsto K^{-1}(s)$ is concave on $\R_+$ (since $K$ is convex and increasing). This guarantees that the weak limit $\mu$ of $\mu^\tau$ must be smaller or equal than $K^{-1}$ of the weak limit of $K(\mu^\tau)$.

Then, we use the function $E(s)=sK^{-1}(s)$, and we have a sequence, $v^\tau:=E(\mu^\tau)$, of $L^\infty$ functions on $[0,T]\times\Omega$ such that $v^\tau\deb v$ and $\limsup_\tau\int E(v^\tau)\leq \int E(v)$ for a strictly convex function $E$. Because of strict convexity, we can write the inequality
$$E(s)\geq E(s_0)+E'(s_0)(s-s_0)+\omega(|s-s_0|),$$
where $\omega$ plays the role of a modulus of strict convexity and is a continuous function with $\omega(0)=0$ and $\omega(s)>0$ for $s>0$. Applying this inequality to $s=v^\tau$ and $s_0=v$, we obtain 
$$\limsup_\tau \int \omega(|v^\tau-v|) \leq \limsup_\tau \int E(v^\tau) - \int E(v) - \int E(v)(v^\tau-v)\leq 0,$$
where we used the weak convergence $v^\tau\deb v$ to handle the very last term.
This implies, up to subsequences, that $\omega(|v^\tau-v|)\to 0$ a.e. and hence that $v^\tau\to v$ a.e. In our case, composing with $E^{-1}$, this provides a.e. convergence for $\mu^\tau$ to $\mu$ and, if we also use the $L^\infty$ bound, $L^p$ convergence for every $p$. \end{proof}

%%%%%%%%%%%%%%%%%%%%%%%%%
%%%%%%%%%%%%%%%%%%%%%%%%%%


\begin{thebibliography}{99}

\bibitem{AmbGigSav}  {\sc L. Ambrosio, N. Gigli and G. Savar\'e}, {\it Gradient Flows in Metric Spaces and in the Space of Probability Measures}, Lectures in Mathematics, Birkh\"auser, 2005.

\bibitem{AmbSer}  {\sc L. Ambrosio and S. Serfaty}, A gradient flow approach to an evolution problem arising in superconductivity, {\it Communications on Pure and Applied Mathematics} 61, 1495-1539, 2008.

\bibitem{Aubin} {\sc J.-P. Aubin}, Un th\'eor\`eme de compacit\'e, {\it C. R. Acad. Sci. Paris} 256, 5042-5044, 1963.

\bibitem{BCLR2} 
{\sc D.~Balagu{\'e}, J.~A. Carrillo, T.~Laurent and G.~Raoul}, Dimensionality of local minimizers of the interaction energy, {\it Arch. Ration. Mech. Anal.} 209, 1055-1088, 2013.

\bibitem{BerLauLeg} 
{\sc A. L. Bertozzi, T. Laurent and F. Leger}, Aggregation and spreading via the Newtonian potential: the dynamics of patch solutions, {\it Mathematical Models and Methods in Applied Sciences} 22, 1140005, 2012. 

\bibitem{BlaCalCar}
{\sc A. Blanchet, V. Calvez and J.-A. Carrillo}, Convergence of the mass-transport steepest descent scheme for the subcritical Patlak-Keller-Segel model, {\it SIAM Journal on Numerical Analysis} 46, 691-721, 2008.

\bibitem{BlaCarCar}
{\sc A. Blanchet, E.A. Carlen and J.-A. Carrillo}, Functional inequalities, thick tails and asymptotics for the critical mass Patlak-Keller-Segel model, {\it Journal of Functional Analysis} 262, 2142-2230, 2012.

\bibitem{BCKKLL} {\sc A. Blanchet, J.-A. Carrillo, D. Kinderlehrer, M. Kowalczyk, P. Lauren\c{c}ot and S. Lisini}, A Hybrid Variational Principle for the Keller-Segel System In $\R^2$, {\it ESAIM M2AN} 49, 1553-1576, 2015.

\bibitem{BlaCarLau}
{\sc A. Blanchet, J.-A. Carrillo and P. Lauren\c{c}ot}, Critical mass for a Patlak-Keller-Segel model with degenerate diffusion in higher dimensions, {\it Calculus of Variations and Partial Differential Equations} 35, 133-168, 2009.

\bibitem{BlaCarMas}
{\sc A. Blanchet, J.-A. Carrillo and N. Masmoudi}, Infinite time aggregation for the critical Patlak-Keller-Segel model in $\R^2$, {\it Communications on Pure and Applied Mathematics} 61, 1449-1481, 2008.

\bibitem{BlaDolPer}
{\sc A. Blanchet, J Dolbeault and B. Perthame}, Two-dimensional Keller-Segel model: optimal critical mass and qualitative properties of the solutions, {\it Electronic Journal of Differential Equations} 44, 1-33, 2006.

\bibitem{caf1} 
{\sc L. Caffarelli}, A localization property of viscosity solutions to the Monge-Amp\`ere equation and their strict convexity, {\it Ann. of Math.} 131, 129-134, 1990.

\bibitem{caf3} 
{\sc L. Caffarelli},  Interior $W^{2,p}$ estimates for solutions of the Monge-Amp\`ere equation. {\it Ann. of Math.} 131, 135-150, 1990.

\bibitem{caf2} 
{\sc L. Caffarelli}, Some regularity properties of solutions of Monge Amp\`ere equation, {\it Comm. Pure Appl. Math.} 44, 965-969, 1991.

\bibitem{CC}
{\sc V. Calvez and J. A. Carrillo}, Volume effects in the Keller-Segel model: energy estimates preventing blow-up, {\it J. Math. Pures Appl.} 86, 155-175, 2006.

\bibitem{CalCor} 
{\sc V. Calvez and L. Corrias}, The parabolic-parabolic Keller-Segel model in $\R^2$, {\it 
Commun. Math. Sci.} 6,  417-447, 2008.

\bibitem{CDFLS}
{\sc J. A. Carrillo, M. DiFrancesco, A. Figalli, T. Laurent and D. Slepčev}, Global-in-time weak measure solutions and finite-time aggregation for nonlocal interaction equations, {\it Duke Math. J.} 156, 229-271, 2011. 

\bibitem{CarLisMai} 
{\sc J. A. Carrillo, S. Lisini and E. Mainini}, Uniqueness for Keller-Segel-type chemotaxis models, {\it Discrete Contin. Dyn. Syst.-Series A} 34, 1319-1338, 2014. 

\bibitem{cordero reg}
{\sc D. Cordero-Erausquin,} Sur le transport de mesures p\'eriodiques, {\it C. R.
Acad. Sci. Paris} S\'er. I Math., vol. 329, no. 3, pp. 199-202, 1999

\bibitem{Craig} 
{\sc K. Craig}, Nonconvex gradient flow in the Wasserstein metric and applications to constrained nonlocal interactions,  {\it Proc. Lond. Math. Soc.} 114, 60-102, 2017.

\bibitem{DePFig survey} 
{\sc G. De Philippis and A. Figalli}, The Monge--Amp\`ere equation and its link to optimal transportation, {\it Bulletin of the American Mathematical Society} 51, 527-580, 2014.

\bibitem{DMS} 
{\sc S. Di Marino, B. Maury and F. Santambrogio}, Measure sweeping processes, {\it Journal of Convex Analysis} 23, 567-601, 2016.

\bibitem{Fig book} 
{\sc A. Figalli}, {\it The Monge-Amp\`ere Equation and Its Applications},
Zurich Lectures in Advanced Mathematics, European Mathematical Society (EMS), Z\"urich, 2017.

\bibitem{HP}
{\sc T. Hillen and  K.~J. Painter}, A user's guide to PDE models for chemotaxis, {\it Journal of Mathematical Biology} 58, 183-217, 2009.

\bibitem{JL}
{\sc W.~J{\"a}ger and S.~Luckhaus}, On explosions of
solutions to a system of partial differential equations modelling
chemotaxis, {\it Trans. Amer. Math. Soc.} 329, 819-824, 1992.

\bibitem{JKO} 
{\sc R. Jordan, D. Kinderlehrer and F. Otto}, The variational formulation of the Fokker-Planck equation, {\it SIAM J. Math. Anal.} 29, 1-17, 1998.

\bibitem{KS}
{\sc E.~F. Keller and L.~A. Segel}, Initiation of slime mold aggregation viewed as an instability, {\it J. Theor. Biol.} 26, 399-415, 1970.
        
\bibitem{LMS}
{\sc S. Lisini, E. Mainini and A. Segatti}, A gradient flow approach to the porous medium equation with fractional pressure, Preprint arXiv:1606.06787.
    
\bibitem{LW}
{\sc J.-G. Liu and J. Wang}, A note on {$L^\infty$}-bound and uniqueness to a degenerate {K}eller-{S}egel model, {\it Acta Appl. Math.} 142, 173--188, 2016.
    
\bibitem{Loeper} 
{\sc G. Loeper}, Uniqueness of the solution to the Vlasov-Poisson system with bounded density, {\it J. Math. Pures Appl.} 86, 68-79, 2006.

\bibitem{MRS10} 
{\sc  B. Maury, A. Roudneff-Chupin and F. Santambrogio}, A macroscopic crowd motion model of gradient flow type, {\it  Mathematical Models and Methods in Applied Sciences} 20, 1787-1821, 2010.

\bibitem{Otto porous} 
{\sc F. Otto}, The geometry of dissipative evolution equations: The porous medium equation,
{\it Comm. Partial Differential Equations} 26, 101-174, 2001.

\bibitem{Poupaud}
{\sc F. Poupaud}, Diagonal defect measures, adhesion dynamics and {E}uler equation,
{\it Methods Appl. Anal.} 9, 2002, 533--561.

%\bibitem{Bourbaki2013}
%{\sc F. Santambrogio},  Flots de gradient dans les espaces m\'etriques et leurs applications (d'apr\`es Ambrosio-Gigli-Savar\'e), proceedings of the {\it Bourbaki Seminar}, 2013 (in French).

\bibitem{OTAM} 
{\sc F. Santambrogio}, {\it Optimal Transport for Applied Mathematicians}, Progress
in Nonlinear Differential Equations and Their Applications 87, Birkh\"auser Basel 2015.

\bibitem{GradFlowSurvey}
{\sc F. Santambrogio}, $\{$Euclidean, Metric, and Wasserstein$\}$ Gradient Flows: an overview, {\it Bulletin of Mathematical Sciences} 7, 87-154, 2017.

\bibitem{San-moment}
{\sc F. Santambrogio}, Dealing with moment measures via entropy and optimal transport,
{\it J. Functional Analysis} 271, 418-436, 2016.

\bibitem{villani} 
{\sc C. Villani}, {\it Optimal Transport. Old and New}, Grundlehren der Mathematischen Wissenschaften 338, Springer-Verlag, Berlin, 2009. 

\end{thebibliography}
\end{document}